\documentclass[11pt,UTF8]{article}
\usepackage{bbm}
\usepackage[all,cmtip]{xy}
\usepackage{amsfonts,amssymb,amsmath,amscd,amsthm,mathtools}
\usepackage{mathrsfs}
\usepackage{latexsym}
\usepackage{color}
\usepackage{verbatim,enumitem}
\usepackage{hyperref}
\usepackage{tikz-cd}
\usepackage[OT1]{fontenc}

\input diagxy

\usepackage[total={150mm,230mm}]{geometry}

\DeclareMathOperator{\pt}{pt}
\DeclareMathOperator{\with}{\&}

\DeclareMathOperator{\thda}{{\rotatebox[origin=c]{-90}{$\twoheadrightarrow$}}}

\theoremstyle{plain}
  \newtheorem{thm}{Theorem}[section]
  \newtheorem{lem}[thm]{Lemma}
  \newtheorem{prop}[thm]{Proposition}
  
\theoremstyle{definition}
  \newtheorem{defn}[thm]{Definition}
  
  \newtheorem{exmp}[thm]{Example}
  \newtheorem{rem}[thm]{Remark}

\newtheorem*{SA}{Standing Assumption}

\newcommand{\ra}{\rightarrow}
\newcommand{\lra}{\longrightarrow}
\newcommand{\lam}{\lambda}

\newcommand{\QSup}{\sQ\text{-}{\sf Sup}}

\newcommand{\QOrd}{\sQ\text{-}{\sf Ord}}
\newcommand{\QFrm}{\sQ\text{-}{\sf Mod}_{\sf frm}}
\newcommand{\QTop}{\sQ\text{-}{\sf Top}}

\newcommand{\sub}{{\rm sub}}
\newcommand{\id}{{\rm id}}

\newcommand{\CP}{\mathcal{P}}
\newcommand{\CF}{\mathcal{F}}
\newcommand{\CO}{\mathcal{O}}

\newcommand{\sfm}{{\sf m}}

\newcommand{\sy}{{\sf y}}

\newcommand{\sQ}{{\sf Q}}
\newcommand{\bv}{\bigvee}
\newcommand{\bw}{\bigwedge}

\def\rto{{\bfig\morphism<180,0>[\mkern-4mu`\mkern-4mu;]\place(78,0)[\mapstochar]\efig}}

\begin{document}

\title{Sober  topological spaces valued in a   quantale}
\author{ Dexue Zhang, Gao Zhang \\    {\small   dxzhang@scu.edu.cn, gaozhang0810@hotmail.com}  }
\date{}
\maketitle
\begin{abstract}
The notion of sobriety is extended to the realm of topological spaces valued in a commutative and unital quantale, via an adjunction between  a category of quantale modules and the category of  quantale-valued topological spaces.   Relations between such sober spaces and quantale-valued domains based on flat ideals are investigated.

\vskip 2pt

\noindent\textbf{Keywords} Fuzzy topology; $\sQ$-order; $\sQ$-module; Flat ideal; Sober $\sQ$-topological space

\noindent \textbf{MSC(2020)} 54A40 54B30 18B30
\end{abstract}

\section{Introduction}
Sobriety is an interesting topological property for non-Hausdorff spaces and plays an important role in domain theory \cite{Gierz2003,Goubault}. A main feature of such a space  is that the space can be recovered from its open set lattice. Sobriety is often described via the   adjunction \[\mathcal{O}\dashv\pt \colon{\sf Frm}^{\rm op}\lra{\sf Top}\] between the opposite of the category  of frames  and the category  of topological spaces  \cite{Johnstone}. A topological space $X$ is sober  if the unit $\eta_X\colon  X\lra \pt \mathcal{O}(X)$ is a bijection (hence a homeomorphism).

Different extensions of the notion of sobriety to the fuzzy context have been proposed since 1990. Here we mention a few of the works on this topic:  Rodabaugh \cite{Rodabaugh}, Zhang and Liu \cite{ZL95}, Kotz\'{e} \cite{Kotze97}, Srivastava and Khastgir \cite{SK98}, Pultr and Rodabaugh \cite{PR08a,PR08b},   Guti\'{e}rrez Garc\'{i}a,   H\"{o}hle and de Prada Vicente \cite{GHP},  Yao \cite{Yao11,Yao12}, J\"{a}ger and  Yao \cite{JY2016},  Singh and Srivastava \cite{SS16}, and etc. In these works, the table of truth-values is assumed to be a complete Heyting algebra (i.e., a frame),  sometimes even a completely distributive lattice. For such a table of truth-values, the logic connective \emph{conjunction},   modeled by the meet operation of the lattice, is idempotent. From the viewpoint of fuzzy logic, this is a serious drawback. For example,   BL-algebras \cite{Ha98}  and left-continuous t-norms \cite{Klement2000} are seldom idempotent.

The aim of this paper is to extend the notion of sobriety to the realm of topological spaces valued in a commutative and unital quantale. Complete Heyting algebras, BL-algebras, and the interval $[0,1]$ equipped with a left-continuous t-norm   are typical examples of such quantales.

This paper is a companion of \cite{Zhang2018}, which focuses on sobriety of quantale-valued \emph{cotopological} spaces.
In the classical context, a topological space can be described by open sets as well as by  closed sets, and we can switch between open sets and closed sets by taking complements. Thus,  it makes no difference whether we choose to work with closed sets or with open sets in the classical context. But, in the fuzzy context, the table of truth-values usually does not satisfy the law of double negation,  there is no natural way to switch between open sets and closed sets, so, as observed in \cite{Zhang2018}, in fuzzy topology it is often necessary to consider the topology version (in term of open sets) and the cotopology version (in term of closed sets) of the same concept. In other words, like  a coin, fuzzy topology was born with two sides. And, we would like to stress that, due to the lack of symmetry in the table of truth-values,   different ideas and techniques  are needed in the topology and the cotopology versions of the same theory.

In this paper, like that in \cite{Zhang2018}, we emphasize the connection between quantale-valued orders and quantale-valued topologies. For each commutative and unital quantale $\sQ$, a category of $\sQ$-modules (or equivalently, complete $\sQ$-lattices) is specified, sober $\sQ$-topological spaces are then described via an adjunction between this category and that of $\sQ$-topological spaces.  It is shown that for a commutative and integral quantale, the Scott $\sQ$-topology of every $\CF$-domain (see Definition \ref{defn of F-domain}) is sober. In the final section, some  examples are presented in the case that the quantale is the interval $[0,1]$ together with a continuous t-norm.

\section{Quantales and quantale-valued orders}
In this section we recall some basic ideas about quantale-valued orders and fix some notations.

A \emph{frame} \cite{Johnstone} is a complete lattice $L$ such that   the binary meet operation $\wedge$ distributes over arbitrary joins; that is, $x\wedge\bv S=\bv\{x\wedge s\mid s\in S\}$ for each $x\in L$ and each $S\subseteq L$. A \emph{frame map} $f\colon L_1\lra L_2$ between frames is a map that preserves arbitrary joins and finite meets, including the empty ones. Frames and frame maps constitute a category \[\sf Frm.\]

Following Rodabaugh \cite[page 303]{Rodabaugh}, we say that a map $f\colon L\lra M$ between complete lattices (not necessarily frames) is  \emph{frame-like} if $f$ preserves arbitrary joins and finite meets. 

A  \emph{quantale} \cite{Rosenthal1990}
\(\sQ=(\sQ,\with)\)
is a semigroup such that the underlying set $\sQ$ is a complete lattice   and that the multiplication $\with$ distributes over arbitrary joins on both sides. As usual, we write $0$ and $1$ for the bottom and the top element of $\sQ$, respectively.  A quantale $\sQ$ is \begin{itemize} \setlength{\itemsep}{0pt} \item  commutative  if the multiplication $\with$ is commutative; \item unital if it has a unit element $k$; \item integral  if it is unital and the unit $k$ is the top element  of $\sQ$.\end{itemize}

It is clear that a complete lattice $L$ is a frame if, and only if, $(L,\wedge)$ is a quantale. In this paper, when we say that a quantale $\sQ=(\sQ,\with)$ is a frame, we mean the semigroup operation $\with$ of $\sQ$ is the meet operation $\wedge$. It is possible that the underlying lattice of a   quantale   is a frame, but the quantale itself is not a frame.

\begin{SA}In this paper, by a quantale we always mean a commutative and unital one, with unit denoted by $k$, unless otherwise specified.\end{SA}

Given a quantale $\sQ$, the  multiplication $\&$ determines a binary operator $\ra$,  known as the \emph{implication operator} of $\&$, via the adjoint property:
\[p\with q\leq r\iff q\leq p\ra r.\]

A \emph{$\sQ$-valued order}, a $\sQ$-order for short, is a map $\alpha\colon X\times X\lra \sQ$ such that for all $x,y,z\in X$,
$$\alpha(x,x)\geq k\quad \text{and}\quad  \alpha(y,z)\with \alpha(x,y)\leq\alpha(x,z).$$
The pair $(X,\alpha)$  is call a \emph{$\sQ$-ordered set}, or  a \emph{$\sQ$-category} in the language of enriched categories \cite{Lawvere1973}.  It is customary to write $X$ for the pair $(X,\alpha)$ and write $X(x,y)$ for $\alpha(x,y)$.

A map $f\colon X\lra Y$ between $\sQ$-ordered sets is said to \emph{preserve $\sQ$-order}   if $X(x,y)\leq Y(f(x),f(y))$ for all $x,y\in X$.
$\sQ$-ordered sets and $\sQ$-order-preserving maps constitute a category
 \[\sQ\text{-}\sf Ord.\]

If $\alpha$ is a $\sQ$-order on $X$, it follows from the commutativity of $\with$ that $\alpha^{\rm op}(x,y)\coloneqq\alpha(y,x)$ is also a $\sQ$-order on $X$, called the opposite of $\alpha$.

The following two examples belong to the folklore in the theory of $\sQ$-orders, it is hard to specify where they appeared for the first time.
\begin{exmp}
For all $x,y\in \sQ$, let
$$\alpha_L(x,y)=x\ra y.$$ Then $\alpha_L$ is a   $\sQ$-order, called the \emph{canonical $\sQ$-order}, on $\sQ$. The opposite of $\alpha_L$ is denoted by $\alpha_R$; that is, $$  \alpha_R(x,y)= y\ra x.$$
\end{exmp}

\begin{exmp}\label{inclusion Q-order} For each set $X$, the map \[\sub_X\colon \sQ^X\times\sQ^X\lra\sQ, \quad  \sub_X(\phi,\psi)=\bigwedge_{x\in X}\phi(x)\ra\psi(x)\] is a  $\sQ$-order on the set $\sQ^X$, known as the  \emph{inclusion $\sQ$-order}.   If $X$ is a singleton set, then $(X,\sub_X)$ degenerates to the $\sQ$-ordered set $(\sQ,\alpha_L)$.
\end{exmp}

The \emph{underlying order} of a $\sQ$-ordered set $X$ refers to the order $\leq$ given by $$x\leq   y \quad\quad  {\rm if}~ k\leq X(x,y) .$$ For convenience, for each $\sQ$-ordered set $X$, we shall write $X_0$ for   $X$ with the underlying order.

Two elements $x,y$ of a $\sQ$-ordered set $X$ are \emph{isomorphic} if \(k\leq X(x,y)\wedge X(y,x).\) A $\sQ$-ordered set $X$ is \emph{separated} if isomorphic elements are identical; that is, \[k\leq X(x,y)\wedge X(y,x) \implies x=y.\]

Let $X,Y$ be $\sQ$-ordered sets; let $f\colon X\lra Y$ and $g\colon Y\lra X$ be  maps.  We say that $f$ is \emph{left adjoint} to $g$ (and/or, $g$ is \emph{right adjoint} to $f$) and write $f\dashv g$, if
$$Y(f(x),y)=X(x,g(y))$$
for all $x\in X$ and $y\in Y$.
This is a special case of enriched adjunctions in  category theory \cite{Borceux1994,Lawvere1973}.
\begin{thm}\label{Characterization of adjoints} {\rm(\cite[page 295]{Stubbe2006})} Let $f\colon X\lra Y$ and $g\colon Y\lra X$ be a pair of maps between $\sQ$-ordered sets. Then $f$ is left adjoint to $g$ if, and only if, the following conditions are satisfied: \begin{enumerate}[label={\rm(\roman*)}] \setlength{\itemsep}{0pt}
  \item Both $f$ and $g$ preserve $\sQ$-order. \item The map $f\colon X_0\lra Y_0$ is left adjoint to $g\colon Y_0\lra X_0$; that is,  for all $x\in X$ and $y\in Y$, $f(x)\leq y\iff x\leq g(y)$. \end{enumerate}\end{thm}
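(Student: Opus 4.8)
The plan is to prove both directions of the biconditional by unwinding the definitions and exploiting the adjoint property in $\sQ$. First I would observe that the forward direction is almost immediate: if $f\dashv g$ in the enriched sense, so that $Y(f(x),y)=X(x,g(y))$ for all $x\in X$, $y\in Y$, then preservation of $\sQ$-order by $f$ follows by a standard Yoneda-type argument — taking $y=f(x')$ in the equality and using $k\leq X(x,x)$ together with the triangle inequality for the $\sQ$-order $X$ — and symmetrically for $g$; moreover condition (ii) is obtained simply by specializing the identity $Y(f(x),y)=X(x,g(y))$ and remembering that $x\leq g(y)$ means exactly $k\leq X(x,g(y))$, likewise $f(x)\leq y$ means $k\leq Y(f(x),y)$.

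The substantive direction is the converse. Assume (i) and (ii); I want to derive the equality $Y(f(x),y)=X(x,g(y))$ for all $x,y$. I would prove the two inequalities separately. For $X(x,g(y))\leq Y(f(x),y)$: since $f$ preserves $\sQ$-order, $X(x,g(y))\leq Y(f(x),f(g(y)))$, so it suffices to know $Y(f(g(y)),y)\geq k$, i.e. $f(g(y))\leq y$ in the underlying order — but this is exactly the counit inequality coming from (ii) applied with $x=g(y)$, using $g(y)\leq g(y)$. Then composing via the triangle inequality $Y(f(g(y)),y)\with Y(f(x),f(g(y)))\leq Y(f(x),y)$ and $k\leq Y(f(g(y)),y)$ gives the claim. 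The reverse inequality $Y(f(x),y)\leq X(x,g(y))$ is symmetric, using that $g$ preserves $\sQ$-order, the unit inequality $x\leq g(f(x))$ from (ii), and the triangle inequality for $X$.

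The one genuinely delicate point — and the step I would flag as the main obstacle — is establishing the unit and counit inequalities $x\leq g(f(x))$ and $f(g(y))\leq y$ purely from the hypothesis (ii) that $f\colon X_0\lra Y_0$ is left adjoint to $g\colon Y_0\lra X_0$. This is the classical characterization of ordinary (poset) adjunctions in terms of unit/counit, and it is routine: from $f(x)\leq f(x)$ and (ii) we get $x\leq g(f(x))$, and from $g(y)\leq g(y)$ and (ii) we get $f(g(y))\leq y$. With these in hand, everything else is a mechanical application of the defining inequalities of a $\sQ$-order together with the monotonicity of $\with$ in each variable and the fact that $k$ is the unit, so that $k\with a=a$. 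I would present the argument compactly, emphasizing that the only properties of $\sQ$ used are the adjunction $p\with q\leq r\iff q\leq p\ra r$ implicitly (through $X,Y$ being $\sQ$-orders) and that $k$ acts as a two-sided unit.
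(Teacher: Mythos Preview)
Your argument is correct and is the standard direct verification. Note, however, that the paper does not supply its own proof of this theorem: it is stated with a citation to \cite[page 295]{Stubbe2006} and used as a black box, so there is no in-paper proof to compare against. Your proof is exactly the expected one and would serve perfectly well as a self-contained justification; the forward direction via the unit/counit inequalities $k\leq X(x',g(f(x')))$ and $k\leq Y(f(g(y)),y)$ (obtained by specializing the adjunction identity) together with transitivity, and the converse via $\sQ$-order preservation plus the poset-level unit/counit from (ii), are precisely what one finds in the cited reference.
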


Let $X$ be a $\sQ$-ordered set. A  \emph{weight}   of  $X$ is a map $\phi\colon  X\lra \sQ$ such that for all $x,y\in X$,
$$\phi(y)\with X(x,y)\leq\phi(x).$$

\begin{rem}There exist different terminologies for a weight  of a $\sQ$-ordered set $X$. Firstly, since the above inequality can be read as ``that $y$ belongs to $\phi$ and $x$ is smaller than or equal to $y$ implies $y$ belongs to $\phi$'', a weight is also called a \emph{lower fuzzy set} of $X$.
Secondly, since a weight  of $X$ is just a $\sQ$-order-preserving map (or, a $\sQ$-functor) $\phi\colon X^{\rm op}\lra(\sQ,\alpha_L)$, it is also called a $\sQ$-presheaf of $X$ (viewed as an enriched category). The terminology \emph{weight} adopted here comes from category theory, see e.g. \cite{KS2005}. \end{rem}

The weights of a $\sQ$-ordered set $X$ constitute a $\sQ$-ordered set $\mathcal{P}X$ with
$$\mathcal{P}X(\phi_1,\phi_2)\coloneqq\sub_X(\phi_1,\phi_2).$$

For each $x\in X$, $X(-,x)$ is a weight of $X$  and we have the following:

\begin{lem}[Yoneda lemma]
Let $X$ be a $\sQ$-ordered set and $\phi$ be a weight of $X$, then
$$\mathcal{P}X(X(-,x),\phi)=\phi(x).$$
\end{lem}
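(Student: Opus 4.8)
The plan is to unwind the two definitions involved and reduce the asserted identity to a pair of inequalities, each of which is immediate from a single axiom. Expanding the left-hand side,
\[
\mathcal{P}X(X(-,x),\phi)=\sub_X(X(-,x),\phi)=\bigwedge_{y\in X}\big(X(y,x)\ra\phi(y)\big),
\]
so the goal is to show that this meet equals $\phi(x)$. I would establish ``$\leq$'' and ``$\geq$'' separately.

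For ``$\leq$'', I would simply evaluate the meet at the index $y=x$. Since $X(x,x)\geq k$ and the implication $\ra$ is antitone in its first argument (a consequence of the adjoint property $p\with q\leq r\iff q\leq p\ra r$, because $-\with q$ is monotone), we get $X(x,x)\ra\phi(x)\leq k\ra\phi(x)$; and $k\ra\phi(x)=\phi(x)$ because $k$ is the unit of $\with$. Hence $\bigwedge_{y\in X}\big(X(y,x)\ra\phi(y)\big)\leq X(x,x)\ra\phi(x)\leq\phi(x)$.

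For ``$\geq$'', it suffices to check $\phi(x)\leq X(y,x)\ra\phi(y)$ for every $y\in X$, and by the adjoint property this is equivalent to $X(y,x)\with\phi(x)\leq\phi(y)$, i.e.\ (using commutativity of $\with$) $\phi(x)\with X(y,x)\leq\phi(y)$. But that is exactly the defining inequality of the weight $\phi$ with the pair $(y,x)$ in the role of $(x,y)$. Taking the meet over $y$ gives $\phi(x)\leq\bigwedge_{y\in X}\big(X(y,x)\ra\phi(y)\big)$, and combining the two inequalities finishes the proof. I do not anticipate any genuine obstacle; the only points requiring care are the direction of monotonicity of $\ra$ in each slot and the bookkeeping of which instance of the weight inequality is being invoked, both of which lean on the standing assumption that $\sQ$ is commutative and unital (the latter also being what makes $X(-,x)$ a weight in the first place).
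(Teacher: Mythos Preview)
Your argument is correct and is the standard verification: expand $\mathcal{P}X(X(-,x),\phi)$ as the meet $\bigwedge_{y}\big(X(y,x)\ra\phi(y)\big)$, bound it above by the $y=x$ term using $X(x,x)\geq k$ and antitonicity of $\ra$ in the first variable, and bound it below via the weight inequality $\phi(x)\with X(y,x)\leq\phi(y)$ together with the adjoint property. The paper itself does not supply a proof of the Yoneda lemma; it is stated as a known fact and immediately used, so there is no alternative approach to compare against. Your write-up fills that gap cleanly.
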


The Yoneda lemma entails that the map
$$\sy_X\colon X\lra\mathcal{P}X, \quad x\mapsto X(-,x)$$  is   an embedding if $X$ is separated. By abuse of language,  we call it the \emph{Yoneda embedding} no matter $X$ is separated or not.

Each $\sQ$-order-preserving map $f\colon X\lra Y$ gives rise to a natural adjunction between $\CP X$ and $\CP Y$. Precisely, the map \[f^\ra\colon \mathcal{P}X\lra\mathcal{P}Y,\quad f^\ra(\phi)(y)=\bigvee_{x\in X} \phi(x)\with Y(y,f(x)) \]
is left adjoint to \[f^\leftarrow\colon \mathcal{P}Y\lra\mathcal{P}X, \quad f^\leftarrow(\psi)(x)=\psi(f(x)).\]

Let $X$ be a $\sQ$-ordered set and let  $\phi$ be a weight of $X$. We say that an element $a$ of $X$ is a  \emph{supremum}  of $\phi$ and write $a=\sup\phi$, if
$$X(a,y)=\CP X(\phi, X(-,y))$$
for all $y\in X$. In the language of category theory, $\sup\phi$ is the colimit of the identity functor $\id\colon X\lra X$ weighted by $\phi$ (see e.g. \cite{Borceux1994,Stubbe2005}). Suprema of a weight, if exist, are unique up to isomorphism.

We say that a $\sQ$-ordered set $X$ is \emph{cocomplete} if every weight of $X$ has a supremum. It is clear that $X$ is cocomplete if, and only if, the Yoneda embedding $\sy_X\colon X\lra\mathcal{P}X$ has a left adjoint (see e.g. \cite{Stubbe2005}).

\begin{exmp}\label{sup in Q} The $\sQ$-ordered set $(\sQ,\alpha_L)$ is cocomplete. For each weight $\phi$ of $(\sQ,\alpha_L)$,  \[\sup\phi=\bv_{x\in\sQ}\phi(x)\with x = \phi(k).\]
\end{exmp}

A  \emph{coweight}  of a $\sQ$-ordered set $X$ is defined to be weight of the opposite $X^{\rm op}$ of $X$. Explicitly,  a coweight  of $X$ is a $\sQ$-order-preserving map $\psi\colon X\lra(\sQ,\alpha_L)$.  Coweights are also known as  \emph{upper  fuzzy sets} and  \emph{covariant $\sQ$-presheaves}.
The coweights of $X$ constitute a $\sQ$-ordered set $\mathcal{P}^\dagger X$ with
$$\mathcal{P}^\dagger X(\psi_1,\psi_2)\coloneqq\sub_X(\psi_2,\psi_1).$$

Let $\psi$ be a coweight and let $a$ be an element  of $X$. We say that $a$ is an  \emph{infimum}  of $\psi$  if
$$X(y,a)=\CP^\dagger X(X(y,-),\psi)$$
for all $y\in X$.

A $\sQ$-ordered set  $X$ is \emph{complete} if every coweight of $X$ has an infimum.
It is known  \cite{Stubbe2005} that
\begin{enumerate}[label={\rm(\roman*)}] \setlength{\itemsep}{0pt}
  \item
 a $\sQ$-ordered set $X$ is cocomplete if, and only if, $X$ is complete;  and
\item a $\sQ$-order-preserving map $f\colon X\lra Y$ between cocomplete $\sQ$-ordered sets is a left adjoint if, and only if, $f$ \emph{preserves suprema} in the sense that \(f(\sup\phi)=\sup f^\ra(\phi)\) for each weight $\phi$ of $X$.
\end{enumerate}

A $\sQ$-ordered set is  a \emph{complete $\sQ$-lattice} if $X$ is separated and  complete (or equivalently, cocomplete). Complete $\sQ$-lattices and left adjoints constitute a category   \[\QSup.\]

For each $\sQ$-ordered set $X$, $\CP X$ is cocomplete, hence a complete $\sQ$-lattice \cite{Stubbe2005}. Actually, \[\sy_X^\leftarrow\colon \CP\CP X\lra \CP X \] is a left adjoint of  $\sy_{\CP X}\colon \CP X\lra\CP\CP X$.
This means that the supremum of a weight $\Phi$ of $\CP X$ is given by \[\sup\Phi=\sy_X^\leftarrow(\Phi)=\bv_{\phi\in\CP X} \Phi(\phi)\with\phi.\]

The correspondence \[f\colon X\lra Y\quad \mapsto\quad f^\ra\colon \CP X\lra\CP Y\] defines a functor $\CP\colon \QOrd\lra\QSup$ that is left adjoint to the forgetful functor $\QSup\lra\QOrd$ \cite{Stubbe2005}. We write  \[\mathbb{P}=(\CP,\sfm,\sy)\] for the monad on $\QOrd$ arising from this adjunction.    Explicitly,   \begin{itemize} \setlength{\itemsep}{0pt}\item  for each $\sQ$-order-preserving map $f\colon X\lra Y$, $\CP f=f^\ra\colon \CP X\lra\CP Y$; \item the unit is the Yoneda embedding $\sy_X\colon X\lra\CP X$; \item the multiplication  \(\sfm_X={\sup}_{\CP X}=\sy_X^\leftarrow\colon \CP\CP X\lra \CP X\). \end{itemize}

The category of $\mathbb{P}$-algebras and $\mathbb{P}$-homomorphisms is precisely that of   $\sQ$-complete lattices and left adjoints (see e.g. \cite{LZ2020,Stubbe2005}); that is, \[\mathbb{P}\text{-}{\sf Alg}=\QSup.\]

As we see below, there is a very useful and convenient characterization of $\QSup$, it is equivalent to the category of $\sQ$-modules.   Recall that the category {\sf Sup} of complete lattices and join-preserving maps is symmetric and monoidal closed, see  \cite[Chapter I]{Joyal-Tierney}, thus, as in any such categories \cite{MacLane1998}, one can talk about monoids and monoid actions in {\sf Sup}. It is readily seen that a commutative and unital quantale $\sQ$ is exactly a commutative monoid in {\sf Sup}. So, there are $\sQ$-actions and $\sQ$-modules.

\begin{defn} (\cite{Joyal-Tierney}) A   $\sQ$-module (precisely, a left $\sQ$-module) is a   pair $(X,\otimes)$, where $X$ is a complete lattice and $\otimes\colon \sQ\times X \lra X$ is a   map, called a (left) $\sQ$-action on $X$,  subject to the following conditions:   for all $x\in X$ and $r,s, \in \sQ$, \begin{enumerate}[label={\rm(\roman*)}] \setlength{\itemsep}{0pt}
  \item $k\otimes  x=x$, where $k$ is the unit of $\sQ$; \item $ s\otimes (r\otimes  x)  =(s\with r)\otimes x$; \item   $r\otimes -\colon X\lra X$ preserve joins;     and \item $-\otimes x\colon \sQ\lra X$ preserve joins.
\end{enumerate}\end{defn}
The conditions (iii) and (iv) together amount to requiring that $\otimes:\sQ\times X\lra X$ is a bimorphism in {\sf Sup} (see e.g. \cite[page 5]{Joyal-Tierney}). Thus, by universal property of tensor products  in {\sf Sup}, one sees that the above definition of  $\sQ$-modules, which is a bit more reader-friendly, is equivalent to that in \cite{Joyal-Tierney}.

A   homomorphism  $f\colon (X,\otimes)\lra(Y,\otimes)$ between $\sQ$-modules is a join-preserving map $f\colon X\lra Y$   that preserves the action, i.e., $ r\otimes f(x)= f(r\otimes x)$ for all $r\in \sQ$ and $x\in X$.
$\sQ$-modules and   homomorphisms constitute a category \[\sQ\text{-}{\sf Mod}.\]

If $\sQ$ is the Boolean algebra $(\{0,1\},\wedge)$, then $\sQ\text{-}{\sf Mod}$ is   equivalent to the category ${\sf Sup}$ of complete lattices and join-preserving maps    \cite[page 8, Proposition 1]{Joyal-Tierney}.

The quantale $(\sQ,\with)$ itself is a $\sQ$-module, with $\with$ being   the action. We often view $\sQ$ as a $\sQ$-module in this way.

Let $X$ be a $\sQ$-ordered set. For each $x\in X$ and  $r\in \sQ$, the \emph{tensor of $r$ and $x$} (see e.g. \cite[page 288]{Stubbe2006}), denoted by $r\otimes x$, is a point of $X$ such that for all $y\in X$, \[X(r\otimes x,y)= r\ra X(x,y).\]

A $\sQ$-ordered set $X$ is   \emph{tensored} if the tensor $r\otimes x$ exists for all $x\in X$ and $r\in \sQ$. Some  facts about tensors are listed below, which can be found in  \cite{Stubbe2006} and \cite{LZ2009}. \begin{itemize} \setlength{\itemsep}{0pt} \item  Every  complete $\sQ$-lattice is tensored. Actually, $r\otimes x$ is the supremum of the weight $r\with X(-,x)$. \item  Every left adjoint $f$ preserves tensors; that is, $r\otimes f(x)= f(r\otimes x)$. \item A map $f\colon X\lra Y$ between complete $\sQ$-lattices preserves $\sQ$-order  if, and only if, $f\colon X_0\lra Y_0$ preserves order and $r\otimes  f(x) = f( r\otimes x)$ for all $x\in X$ and $r\in \sQ$.  \end{itemize}

Given a complete $\sQ$-lattice $X$, taking tensor \((r,x)\mapsto r\otimes x\) makes $X$ into a $\sQ$-module. Conversely, given a $\sQ$-module $(X,\otimes)$, if we let \[\alpha(x,y)=\bv\{r\in\sQ\mid r\otimes x\leq y\},\] then $(X,\alpha)$ is a complete $\sQ$-lattice. These two processes are inverse to each other. So we have the following characterization of $\QSup$, which is   an instance of  \cite[Corollary 4.13]{Stubbe2006}.
\begin{prop} The category $\sQ\text{-}{\sf Mod}$ is equivalent to  the category $\QSup$. \end{prop}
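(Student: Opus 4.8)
The plan is to produce a pair of functors between $\sQ\text{-}{\sf Mod}$ and $\QSup$ which do not change underlying maps and are mutually inverse on objects; this at once gives an equivalence (in fact an isomorphism) of categories. On objects we use the two assignments already described before the statement: a complete $\sQ$-lattice $X$ is sent to the $\sQ$-module $(X_0,\otimes)$, with $\otimes$ the tensor; a $\sQ$-module $(X,\otimes)$ is sent to the pair $(X,\alpha)$ with $\alpha(x,y)=\bv\{r\in\sQ\mid r\otimes x\leq y\}$.

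First I would check that these assignments are inverse to one another at the object level. Going from a complete $\sQ$-lattice $X$ to $(X_0,\otimes)$ and back amounts to the equality $\bv\{r\mid r\otimes x\leq y\}=X(x,y)$, which holds because $r\otimes x\leq y\iff k\leq X(r\otimes x,y)=r\ra X(x,y)\iff r\leq X(x,y)$. Going from a $\sQ$-module $(X,\otimes)$ to $(X,\alpha)$ and back amounts to showing that the original action $r\otimes x$ is the tensor of $r$ and $x$ in $(X,\alpha)$, i.e. that $\alpha(r\otimes x,y)=r\ra\alpha(x,y)$ for all $y$; using the module axiom $s\otimes(r\otimes x)=(s\with r)\otimes x$ one gets $\alpha(r\otimes x,y)=\bv\{s\mid s\with r\leq\alpha(x,y)\}=r\ra\alpha(x,y)$. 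Along the way one must also see that $(X,\alpha)$ is a separated complete $\sQ$-lattice: its underlying order is $x\leq y\iff k\leq\alpha(x,y)\iff x=k\otimes x\leq y$, so it coincides with the lattice order of $X$ and is antisymmetric, hence $(X,\alpha)$ is separated; completeness follows from the fact that $(X,\alpha)$ is tensored together with the supremum formula $\sup\phi=\bv_{x}\phi(x)\otimes x$.

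The substantive step is the morphism correspondence. Let $f\colon(X,\otimes)\lra(Y,\otimes)$ be a $\sQ$-module homomorphism. It preserves underlying joins and, by the previous paragraph, tensors, so by the recalled characterization of $\sQ$-order-preserving maps between complete $\sQ$-lattices, $f\colon(X,\alpha_X)\lra(Y,\alpha_Y)$ preserves $\sQ$-order. To see that $f$ is a left adjoint it suffices, by the criterion that a $\sQ$-order-preserving map between cocomplete $\sQ$-ordered sets is a left adjoint iff it preserves suprema, to check $f(\sup\phi)=\sup f^\ra(\phi)$ for each weight $\phi$ of $(X,\alpha_X)$; invoking $\sup\phi=\bv_x\phi(x)\otimes x$, the preservation of underlying joins and of the action by $f$, the module axiom, and the identity $\bv_y Y(y,z)\otimes y=z$ (which is $\sup Y(-,z)=z$), both sides collapse to $\bv_x\phi(x)\otimes f(x)$. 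Conversely, a left adjoint $g$ between complete $\sQ$-lattices preserves suprema, hence in particular underlying joins (each join being the supremum of a suitable weight), and preserves tensors, so $g$ is a $\sQ$-module homomorphism. Since neither construction alters underlying maps, functoriality is immediate, and by the object-level computation they are mutually inverse; this yields the asserted equivalence. (Alternatively, the statement is an instance of \cite[Corollary 4.13]{Stubbe2006}.)

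I expect the morphism part to be the main obstacle, and within it the identification ``$\sQ$-module homomorphism $=$ left adjoint'': this genuinely uses Theorem \ref{Characterization of adjoints}, the characterization of left adjoints as suprema-preserving maps, the explicit form of $f^\ra$, and the formula $\sup\phi=\bv_x\phi(x)\otimes x$, and is not purely formal. The remaining items — the two object-level round-trips and the bookkeeping that turns the assignments into functors — are a straightforward unwinding of the definitions recalled in this section.
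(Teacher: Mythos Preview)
Your proposal is correct and follows exactly the two object-level constructions the paper sets up immediately before the proposition; the paper itself gives no proof beyond recording that the result is an instance of \cite[Corollary 4.13]{Stubbe2006}, so your explicit verification of the object round-trips and of the morphism correspondence (module homomorphism $\Leftrightarrow$ left adjoint) is fully consistent with---and more detailed than---the paper's own treatment.
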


\begin{exmp}For each set $X$, the assignment $(r,\lambda)\mapsto r\with\lambda$ defines a $\sQ$-action on $\sQ^X$, so, $\sQ^X$ becomes a $\sQ$-module. We shall always view $\sQ^X$ as a $\sQ$-module in this way. It is clear that the corresponding $\sQ$-order of the $\sQ$-module $\sQ^X$ is just the inclusion $\sQ$-order $\sub_X$ in Example \ref{inclusion Q-order}. \end{exmp}

For more information on $\sQ$-modules and their relations to $\sQ$-orders the reader is  referred to the monograph \cite{EGHK}.

\section{Sober $\sQ$-topological spaces}

A \emph{$\sQ$-topology} on a set $ X $ is subset $ \tau $ of $\sQ^X $ subject to the following conditions:
\begin{enumerate}[label={\rm(O\arabic*)}] \setlength{\itemsep}{0pt}
\item The constant map $1_X\colon X\lra\sQ$ with value $1$ belongs to $\tau$;
\item  $ \lambda\wedge\mu\in\tau $ for all $ \lambda,\mu\in\tau $;
\item    $ \bigvee_{j\in J}\lambda_{j}\in \tau $ for each subset $\{\lambda_{j}\}_{j\in J}$ of $ \tau $; \item $\lam\with r \in\tau$ for all $r\in \sQ$ and $\lam\in \tau$, where  $(\lam\with r)(x)=\lam(x)\with r$ for all $x\in X$.
\end{enumerate} The pair $(X,\tau)$ is called a  $\sQ$-topological space; elements of $\tau$ are called open sets.



It is customary to write $X$ for a $\sQ$-topological space and write $\mathcal{O}(X)$ for the set of open sets of $X$.
The interior operator of a $\sQ$-topological space $X$ refers to the map
\[(-)^\circ\colon \sQ^X\lra\sQ^X,\quad
\lambda^\circ=\bv\{\mu\in\mathcal{O}(X)\mid\mu\leq\lambda\}.\]  The interior operator  satisfies the following conditions: for all $\lambda,\mu\in\sQ^X$, (int1) $1_X^\circ=1_X$;
(int2) $ \lambda^\circ \leq \lambda$;
(int3) $(\lambda\wedge\mu)^\circ= \lambda^\circ\wedge\mu^\circ$;
(int4) \(\sub_X(\lambda,\mu)\leq\sub_X(\lambda^\circ,\mu^\circ)\); and
(int5) $(\lambda^\circ)^\circ=\lambda^\circ$.
Actually, $\sQ$-topologies on $X$ correspond bijectively to operators $\sQ^X\lra\sQ^X$ satisfying (int1)-(int5). This fact also justifies the definition of $\sQ$-topology.



A map $f\colon X\lra Y$ between  $\sQ$-topological spaces is continuous if $\lam\circ f$ is an open set of $X$ for each open set $\lam$ of $Y$.  The category of $\sQ$-topological spaces and continuous maps is denoted by \[\QTop.\]

Given a $\sQ$-topological space $X$, the $\sQ$-relation \[\Omega X\colon X\times X\lra\sQ,\quad \Omega X(x,y)=\bw_{\lam\in\CO(X)}\lam(x)\ra\lam(y)\] is a $\sQ$-order, called the \emph{specialization $\sQ$-order} of the space $X$. In this way, we obtain a functor \[\Omega\colon \QTop\lra\QOrd.\] It is known that $\Omega$ has a left adjoint (see e.g.  \cite{LZ2006}).

We say that a $\sQ$-topological space $X$ is   $T_0$  if for any pair $x,y$ of distinct points, there is an open set $\lam$ such that $\lam(x)\not=\lam(y)$. It is clear that $X$ is $T_0$ if, and only if, the specialization $\sQ$-order of $X$ is separated.

Write \[\QFrm\] for the category composed of   $\sQ$-modules and  frame-like $\sQ$-module homomorphisms, that is,   $\sQ$-module homomorphisms that preserves finite meets.
In the case that $\sQ$ is the two-element Boolean algebra, $\QFrm$ coincides with the category {\sf CSLF}   of complete lattices and frame-like maps in Rodabaugh \cite{Rodabaugh}.

For each $\sQ$-topological space $X$, by commutativity of $\&$, the set $\mathcal{O}(X)$  of open sets of $X$ is a   $\sQ$-module with $\sQ$-action \[r\otimes \lambda\coloneqq  \lambda\with r.\]
The correspondence $X\mapsto\CO(X)$ gives rise to a functor \[\mathcal{O}\colon \sQ\text{-}{\sf Top}\lra \QFrm^{\rm op}.\]

Given a $\sQ$-module $(A,\otimes)$, by a \emph{point} of $(A,\otimes)$ we mean a frame-like $\sQ$-module homomorphism $$p\colon (A,\otimes)\lra (\sQ,\with).$$ Explicitly, a point of $(A,\otimes)$ is a map $p\colon A\lra\sQ$  subject to the following conditions:  \begin{enumerate}[label={\rm (pt\arabic*)}]\item $p(\top)=1$, where $\top$ is the top element of $A$; \item $p(\lambda\wedge \mu)=p(\lambda)\wedge p(\mu)$; \item $p\big(\bv_{i\in I}\lambda_i\big) = \bv_{i\in I}p(\lambda_i)$; \item $p(r\otimes\lambda) =  r\with p(\lambda)$. \end{enumerate}

\begin{defn}\cite{Zhang2018} \label{defn of sober} A $\sQ$-topological space $X$ is sober if  for each point $p\colon \mathcal{O}(X)\lra\sQ$ of the $\sQ$-module $\CO(X)$,
there is a unique point $x$ of $X$ such that $p(\lambda)=\lambda(x)$ for all   $\lambda\in\mathcal{O}(X)$.\end{defn}

\begin{rem}[When $\sQ$ is a frame, I] \label{frame-valued II}
Assume that $\sQ=(\sQ,\with)$ is a frame, i.e., $\with=\wedge$. For each $\sQ$-topological space $X$, the assignment $r\mapsto r_X$ defines a frame map $i_X\colon\sQ\lra \CO(X)$, where $r_X$ denotes the constant open set with value $r$. It is clear that a map $p\colon \CO(X)\lra\sQ$ is a point of the $\sQ$-module $\CO(X)$  if, and only if, $p$ is a frame map such that $p\circ i_X$ is the identity map. Thus, for such a quantale, a $\sQ$-topological space is sober in the sense of Definition \ref{defn of sober} if, and only if, it is sober in the sense of \cite{ZL95}. \end{rem}

\begin{exmp} (See \cite[Example 3.2(b)]{SK98} in the case that $(\sQ,\with)$ is a frame) Write  $\mathbb{S}$ for the  $\sQ$-topological space obtained by endowing $\sQ$ with the coarsest $\sQ$-topology that has the identity map as an open set. We  call $\mathbb{S}$ the \emph{Sierpi\'{n}ski $\sQ$-topological space}.
It is readily verified that the specialization $\sQ$-order of $\mathbb{S}$ is the canonical $\sQ$-order on $\sQ$; that is,  $\Omega\thinspace\mathbb{S}=(\sQ,\alpha_L)$.

If $(\sQ,\with)$ satisfies the following requirements: \begin{itemize} \setlength{\itemsep}{0pt}\item $(\sQ,\with)$ is integral, \item the underlying lattice $\sQ$ is a frame, \item $(a\wedge b)\with r= (a\with r)\wedge(b\with r)$ for all $r,a,b\in\sQ$,\end{itemize}
then, by help of the identity \((a\vee(x\with r))\wedge b = (a\wedge b)\vee(b\wedge(x\with r)) \) one sees that     \[\CO(\mathbb{S})= \{(a\vee(\id\with r))\wedge b\mid r,a,b\in\sQ\}.\]
Thus, each map $p\colon\CO(\mathbb{S})\lra\sQ$ satisfying (pt1)-(pt4) is determined by its value at the identity map  on $\sQ$. From this one derives that $p(\lambda)=\lambda(a)$ for all $\lambda\in\CO(\mathbb{S})$, where $a=p(\id)$, hence $\mathbb{S}$ is sober. \end{exmp}

As one expects, sobriety of $\sQ$-topological spaces can be described via an adjunction  between the categories $\QTop$ and $\QFrm^{\rm op}$.
Given a $\sQ$-module $(A,\otimes)$, write \[\pt A\] for the set of all points of $(A,\otimes)$. For each $\lambda\in A$, define \[\widehat{\lambda}\colon \pt A\lra\sQ\] by \[\widehat{\lambda}(p)=p(\lambda).\] Then \[\widehat{A}\coloneqq\{\widehat{\lambda}\mid \lambda\in A\}\] is a $\sQ$-topology on $\pt A$ and the assignment \[(A,\otimes)\mapsto (\pt A, \widehat{A})\] defines a functor \[\pt \colon \QFrm^{\rm op}\lra\QTop.\]

\begin{prop}  $\pt \colon \QFrm^{\rm op}\lra\QTop$ is  right adjoint to $\mathcal{O}\colon \sQ\text{-}{\sf Top}\lra \QFrm^{\rm op}$.\end{prop}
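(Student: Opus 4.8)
The plan is to establish directly the natural bijection $\QFrm^{\rm op}(\mathcal{O}(X),A)\cong\QTop(X,\pt A)$ underlying the adjunction $\mathcal{O}\dashv\pt$. A morphism $\mathcal{O}(X)\to A$ in $\QFrm^{\rm op}$ is by definition a frame-like $\sQ$-module homomorphism $h\colon A\to\mathcal{O}(X)$; to such an $h$ I assign the map $\Psi(h)\colon X\to\pt A$ given by $\Psi(h)(x)(\lambda)=h(\lambda)(x)$. Conversely, to a continuous map $f\colon X\to\pt A$ I assign $\Phi(f)\colon A\to\mathcal{O}(X)$, $\Phi(f)(\lambda)=\widehat{\lambda}\circ f$; this lands in $\mathcal{O}(X)$ exactly because $f$ is continuous and each $\widehat{\lambda}$ is an open set of $\pt A$.

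The first block of verifications is that $\Phi$ and $\Psi$ are well defined. For $\Psi$: with $x$ fixed, the four conditions (pt1)--(pt4) for $\Psi(h)(x)$ are read off from the corresponding preservation properties of $h$, namely $h(\top)=1_X$ (empty meet), preservation of binary meets, of arbitrary joins, and of the $\sQ$-action; the last of these also uses that the action is $r\otimes(-)=(-)\with r$ on both $\mathcal{O}(X)$ and $\sQ$, together with commutativity of $\with$ to turn $r\with h(\lambda)(x)$ into $h(\lambda)(x)\with r$. Continuity of $\Psi(h)$ is then the one-line computation $\widehat{\lambda}\circ\Psi(h)=h(\lambda)\in\mathcal{O}(X)$. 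For $\Phi$: that $\Phi(f)$ is a frame-like $\sQ$-module homomorphism follows by composing $f$ with the pointwise identities $\widehat{\top}=1_{\pt A}$, $\widehat{\lambda\wedge\mu}=\widehat{\lambda}\wedge\widehat{\mu}$, $\widehat{\bigvee_i\lambda_i}=\bigvee_i\widehat{\lambda_i}$ and $\widehat{r\otimes\lambda}=r\otimes\widehat{\lambda}$ holding in $\mathcal{O}(\pt A)$, each of which is just (pt1)--(pt4) asserted at every point of $A$ simultaneously.

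Next I would check that $\Phi$ and $\Psi$ are mutually inverse — both composites reduce at once to the trivial identity $h(\lambda)(x)=(\widehat{\lambda}\circ f)(x)$ — and then naturality in $X$, where $\mathcal{O}$ acts on a continuous $g$ by $\mathcal{O}(g)=(-\circ g)$, and in $A$, where $\pt$ sends a $\QFrm$-morphism $\psi$ to $p\mapsto p\circ\psi$; these are short diagram chases. Equivalently, the same content can be packaged as unit and counit: $\eta_X\colon X\to\pt\mathcal{O}(X)$, $x\mapsto(\lambda\mapsto\lambda(x))$, is continuous because the evaluation-at-$x$ map inherits (pt1)--(pt4) from the closure axioms (O1)--(O4) and because $\widehat{\lambda}\circ\eta_X=\lambda$, while $\varepsilon_A\colon A\to\mathcal{O}(\pt A)$, $\lambda\mapsto\widehat{\lambda}$, is a $\QFrm$-morphism by the computation above; the two triangle identities are then a direct check again built on $\widehat{\lambda}\circ\eta_X=\lambda$.

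There is no genuine obstacle: the whole statement is bookkeeping. The point requiring most care is the $\sQ$-action clause, i.e., (pt4) for $\Psi(h)(x)$ and its counterpart $\widehat{r\otimes\lambda}=r\otimes\widehat{\lambda}$ for $\Phi$, where one must keep apart the three ambient $\sQ$-module structures — on $\sQ^X$, on $\mathcal{O}(X)$, and on $\sQ$ itself — and invoke commutativity of $\with$. It is also worth recording that $\lambda\mapsto\widehat{\lambda}$ need not be injective, so $\mathcal{O}(\pt A)$ is in general a proper quotient of $A$ and $\varepsilon_A$ fails to be an isomorphism; this is precisely why sobriety is captured by the adjunction (the fixed points of the unit) rather than by an equivalence of categories.
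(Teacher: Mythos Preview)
Your proposal is correct and is essentially the same verification the paper has in mind; the paper is simply terser, recording only the unit $\eta_X(x)(\lambda)=\lambda(x)$ and the counit $\varepsilon_A(\lambda)=\widehat{\lambda}$ and leaving the routine checks you spell out (well-definedness, triangle identities/naturality) to the reader. Your explicit hom-set bijection $\Psi$/$\Phi$ unpacks exactly these data, and your emphasis on the (pt4)/action clause and the role of commutativity of $\with$ is the one place where some care is genuinely needed.
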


\begin{proof}We only spell out  the unit and the counit of the adjunction here.
The unit:  for each $\sQ$-topological space $X$, \[\eta_X\colon X\lra \pt \mathcal{O}(X),\quad \eta_X(x)(\lambda)=\lambda(x).\]
The counit:  for each $\sQ$-module $(A,\otimes)$, \begin{align*}\varepsilon_A\colon  A\lra \mathcal{O}(\pt A),& \quad \varepsilon_A(\lambda)=\widehat{\lambda}.\qedhere\end{align*} \end{proof}

\begin{prop}
Let $X$ be a $\sQ$-topological space and let $(A,\otimes)$ be a $\sQ$-module.   \begin{enumerate}[label={\rm(\roman*)}] \setlength{\itemsep}{0pt}
  \item $X$ is a $T_0$ space if, and only if, $\eta_X\colon X\lra \pt \mathcal{O}(X)$ is injective. \item  $X$ is sober if,  and only if,    $\eta_X\colon X\lra \pt \mathcal{O}(X)$ is bijective, hence a homeomorphism. \item  The   $\sQ$-topological space $\pt A$ is  sober.    In particular,  the space $\pt \mathcal{O}(X)$ is sober and  is called the sobrification of $X$. \end{enumerate} \end{prop}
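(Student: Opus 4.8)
The plan is to prove the three statements in order, exploiting the adjunction $\mathcal{O}\dashv\pt$ of the previous proposition together with the explicit formulas for $\eta_X$ and $\varepsilon_A$.

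\textbf{Part (i).} By definition $\eta_X(x)(\lambda)=\lambda(x)$, so $\eta_X(x)=\eta_X(y)$ precisely when $\lambda(x)=\lambda(y)$ for every open set $\lambda$. Thus $\eta_X$ is injective if and only if distinct points are separated by some open set, which is exactly the $T_0$ condition. I would simply unwind both definitions and observe they coincide.

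\textbf{Part (ii).} First note that $\eta_X$ is always continuous (it is the unit of the adjunction), and that a continuous bijection here is automatically a homeomorphism: the open sets of $\pt\mathcal{O}(X)$ are exactly the $\widehat{\lambda}$ for $\lambda\in\mathcal{O}(X)$, and $\widehat{\lambda}\circ\eta_X=\lambda$, so if $\eta_X$ is bijective the inverse pulls $\lambda$ back to $\widehat{\lambda}$, giving continuity of $\eta_X^{-1}$. It remains to match "bijective" with "sober". By Part (i), injectivity of $\eta_X$ is $T_0$. For surjectivity: a point of the $\sQ$-module $\mathcal{O}(X)$ is by definition a frame-like homomorphism $p\colon\mathcal{O}(X)\lra\sQ$, i.e., an element of $\pt\mathcal{O}(X)$; saying $p$ lies in the image of $\eta_X$ says exactly that there exists $x\in X$ with $p(\lambda)=\lambda(x)$ for all $\lambda$. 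So $\eta_X$ is surjective iff every point of $\mathcal{O}(X)$ arises from a point of $X$, and it is bijective iff moreover such $x$ is unique — which is precisely Definition \ref{defn of sober} of sobriety. (One small check: uniqueness of $x$ in the definition already forces $T_0$, so "bijective" and "sober" genuinely agree.)

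\textbf{Part (iii).} Apply Part (ii) to the space $X=\pt A$: I must show $\eta_{\pt A}\colon \pt A\lra\pt\mathcal{O}(\pt A)$ is bijective. The clean way is to use the counit $\varepsilon_A\colon A\lra\mathcal{O}(\pt A)$, $\varepsilon_A(\lambda)=\widehat{\lambda}$, and the triangle identity $\mathcal{O}(\eta_{\pt A})\circ\varepsilon_{\mathcal{O}(\pt A)}\cdots$ — more concretely, a point $q$ of $\mathcal{O}(\pt A)$ is a frame-like homomorphism on $\{\widehat{\lambda}\mid\lambda\in A\}$; precomposing with $\varepsilon_A$ gives a frame-like homomorphism $q\circ\varepsilon_A\colon A\lra\sQ$, i.e., a point $p\in\pt A$. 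One checks $\eta_{\pt A}(p)=q$ by evaluating on $\widehat{\lambda}$: $\eta_{\pt A}(p)(\widehat{\lambda})=\widehat{\lambda}(p)=p(\lambda)=q(\varepsilon_A(\lambda))=q(\widehat{\lambda})$. This gives surjectivity; injectivity is the $T_0$ property of $\pt A$, which follows because if $p,p'\in\pt A$ satisfy $\widehat{\lambda}(p)=\widehat{\lambda}(p')$ for all $\lambda$, then $p(\lambda)=p'(\lambda)$ for all $\lambda$, so $p=p'$. Hence $\eta_{\pt A}$ is bijective and $\pt A$ is sober; the special case $A=\mathcal{O}(X)$ gives the sobrification claim.

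The only genuinely delicate point is the surjectivity argument in Part (iii): one must be sure that $q\circ\varepsilon_A$ really is a \emph{point}, i.e., that it is frame-like and $\sQ$-module preserving, which hinges on $\varepsilon_A$ being a morphism in $\QFrm^{\mathrm{op}}$ — equivalently that $\varepsilon_A\colon A\lra\mathcal{O}(\pt A)$ preserves joins, finite meets and the $\sQ$-action. That $\widehat{(-)}$ preserves these is a pointwise calculation in $\sQ$ (joins and meets of the $\widehat{\lambda}$ are computed argumentwise, using that each $p$ preserves them), so it reduces to the properties (pt1)--(pt4) of points; it is routine but is the one place where the hypotheses are actually used. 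Everything else is formal manipulation of the adjunction unit and counit.
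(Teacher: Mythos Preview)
Your proposal is correct and follows essentially the same approach as the paper. The paper itself only says ``the verification is standard'' and elaborates just one point, namely that a bijective $\eta_X$ is a homeomorphism, which it does by showing $\eta_X$ is open (the image of $\lambda$ is $\widehat{\lambda}$); your version of that step --- showing $\eta_X^{-1}$ pulls $\lambda$ back to $\widehat{\lambda}$ --- is the same computation read contravariantly, and your treatment of Part~(iii) via $q\mapsto q\circ\varepsilon_A$ is exactly the standard argument the paper leaves implicit.
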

\begin{proof}The verification is standard. For example, we check that if $\eta_X\colon X\lra \pt \mathcal{O}(X)$ is bijective, then it is a homeomorphism. For this, we check that $\eta_X$ is an open map in this case.
Write $\hat{x}$ for the point $\eta_X(x)$ of $\pt \mathcal{O}(X)$. Since $\eta_X$ is bijective, every point of $\pt \mathcal{O}(X)$ is of the form $\hat{x}$ for a unique $x\in X$.  Since for each open set $\lambda$  and each point $x$  of $X$,   \(\widehat{\lambda} (\hat{x})=\eta_X(x)(\lambda)= \lambda (x)\), it follows that the image of $\lambda$ under $\eta_X$ is  $\widehat{\lambda}$, hence  $\eta_X$ is an open map. \end{proof}
A $\sQ$-module $(A,\otimes)$ is   \emph{spatial} if the counit $\varepsilon_A\colon  A\lra \mathcal{O}(\pt A)$ is an isomorphism. It is clear that for each $\sQ$-topological space $X$, the $\sQ$-module $\CO(X)$ is spatial. The category of sober $\sQ$-topological spaces is dually equivalent to the subcategory of $\QFrm$ consisting spatial $\sQ$-modules, and is  a reflective full subcategory of $\QTop$.

\begin{rem}[When $\sQ$ is a frame, II] \label{frame-valued III}
Assume that $\sQ=(\sQ,\with)$ is a frame, i.e., $\with=\wedge$. Let $i\colon \sQ\lra A$ be an object of the slice category $\sQ\downarrow{\sf Frm}$; that is, $i$ is a frame map. Define $\otimes\colon \sQ\times
A\lra A$ by $r\otimes x=x\wedge i(r)$. Then $(A,\otimes)$ is   a $\sQ$-module. This assignment makes  $\sQ\downarrow{\sf Frm}$ into a full subcategory of    $\QFrm$   containing all  spatial $\sQ$-modules. There is a nice characterization of this subcategory: a $\sQ$-module $(A,\otimes)$  is generated by an object in the slice category  $\sQ\downarrow{\sf Frm}$ if, and only if, for each $x\in A$ the map $x\wedge -\colon A\lra A$ is a $\sQ$-module homomorphism.
This characterization is essentially Proposition 3.7 and Proposition 3.9 in \cite{Yao12}. For the sake of completeness, a direct verification is included here.

Necessity is obvious, we only need  to check the sufficiency. First, since $x\wedge -\colon A\lra A$ is a $\sQ$-module homomorphism for all $x\in A$, it follows that, in $A$,  binary meets distribute over arbitrary joins, hence $A$ is a frame.  Define $i\colon \sQ\lra A$ by $i(r)=r\otimes\top$, where $\top$ is the top element of the frame $A$. We claim that $i$ is a frame map and the $\sQ$-module $(A,\otimes)$ is generated by $i$. That $i$  preserves join is clear,  to see that it preserves finite meet, let $r,s\in \sQ$. Then \begin{align*}i(r)\wedge i(s)&= (r\otimes\top)\wedge(s\otimes\top)\\ &= s\otimes ((r\otimes\top)\wedge\top)  \\ &= s\otimes (r\otimes\top)\\ &=(s\wedge r) \otimes\top \\ &= i(r\wedge s),\end{align*}where the second equality holds because $(r\otimes \top)\wedge-\colon A\lra A$ is a $\sQ$-module homomorphism. It remains to check that $(A,\otimes)$ is generated by $i$. This is easy, since for all $r\in\sQ$ and $x\in A$,  \[r\otimes x=r\otimes(x\wedge \top)=x\wedge(r\otimes \top)= x\wedge i(r).\]
\end{rem}

\begin{rem}Let $\sQ=(\sQ,\with)$ be a commutative and integral quantale, viewed as a $\sQ$-module. It is trivial that $(\sQ,\with)$ is spatial. But, the map \[r\wedge-\colon (\sQ,\with)\lra (\sQ,\with)\] is a  $\sQ$-module homomorphism for all $r\in\sQ$ if, and only if,   the quantale $(\sQ,\with)$ is a frame, i.e., $\with=\wedge$. Sufficiency is clear, to see the necessity, let $r\in\sQ$. Since $r\wedge-$ preserves the $\sQ$-action, it follows that
\[r=r\wedge(r\with 1)=r\with(r\wedge1)  =r\with r,\] hence $r$ is idempotent and consequently, $(\sQ,\with)$ is a frame. \end{rem}

\section{Sobriety of $\CF$-domains}
It is well-known in domain  theory that the Scott topology of a domain (= continuous dcpo) is sober   \cite{Gierz2003,Goubault}. This section concerns an analogy of this conclusion in the enriched context when $\sQ$ is an integral and commutative quantale.


For each weight $ \phi $ and each coweight $ \psi $ of a $\sQ$-ordered set $ X $, let \[ \phi \pitchfork \psi=\bv_{x\in X}\phi(x)\with \psi(x). \]

It is easily verified that for each weight $ \phi $, each coweight $ \psi $, and each point $a$ of a $\sQ$-ordered set $ X $, \[\phi\pitchfork X(a,-)=\phi(a) \quad \text{and} \quad X(-,a)\pitchfork\psi=\psi(a).\]

\begin{rem}From the viewpoint of fuzzy set theory, the value $\phi \pitchfork \psi$ measures the degree that $\phi$ and $\psi$  have a common point when $\phi$  and $\psi$ are viewed as fuzzy subsets of $X$.   From the perspective of category theory, $\phi \pitchfork \psi$ is the colimit of  $\psi\colon X\lra (\sQ,\alpha_L)$ weighted by $\phi$, i.e., the supremum of $\psi^\ra(\phi)$ in $(\sQ,\alpha_L)$ (c.f. \cite[Example 2.11]{LZZ2020}).  Furthermore, if we view $\phi$ as a fuzzy relation $X\rto*$  from $X$ to  a singleton set and view $\psi$ as a fuzzy relation $*\rto X$ from a singleton set to $X$, then $\phi \pitchfork \psi$ is the composite fuzzy relation $\phi\circ\psi\colon *\rto X\rto*$. \end{rem}

\begin{defn}(\cite{LZZ2020,TLZ2014,SV2005})
A  flat ideal of a $\sQ$-ordered $ X $ is a weight  $\phi $ of $X$  that is (i)   inhabited in the sense that $\bv_{x\in X}\phi(x)\geq k$; and (ii)   flat in the sense that for any coweights $\psi_1, \psi_2$ of $X$, \[\phi \pitchfork (\psi_1\wedge\psi_2)= (\phi\pitchfork\psi_1)\wedge (\phi\pitchfork\psi_1).  \]
\end{defn}

Flat ideals are first introduced by Vickers  in the case that $\sQ$ is Lawvere's quantale $([0,\infty]^{\rm op},+)$ and are called \emph{flat left modules} \cite{SV2005}. Flat ideals provide a natural analogy of ideals (= directed lower sets of partially ordered sets) in the $\sQ$-valued context. We haste to remark that there exist different extensions  of the notion of directed lower set to the enriched context,  with flat ideal  being one of them. A comparative study of some of these extensions   can be found in \cite{LZZ2020}.

\begin{prop}{\rm(\cite{LZZ2020,TLZ2014})} If $f\colon X\lra Y$ preserves $\sQ$-order, then for each flat ideal $\phi$ of $X$, $f^\ra(\phi)$ is a flat ideal of $Y$. \end{prop}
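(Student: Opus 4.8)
The plan is to check, one at a time, the three conditions that make $f^\ra(\phi)$ a flat ideal of $Y$: that it is a weight of $Y$, that it is inhabited, and that it is flat. The first is free, since the excerpt already records that $f^\ra\colon\CP X\lra\CP Y$ is a well-defined map between the $\sQ$-ordered sets of weights. For inhabitedness I would simply estimate, using $Y(f(x),f(x))\geq k$ and inhabitedness of $\phi$,
\[\bv_{y\in Y}f^\ra(\phi)(y)=\bv_{y\in Y}\bv_{x\in X}\phi(x)\with Y(y,f(x))\geq\bv_{x\in X}\phi(x)\with Y(f(x),f(x))\geq\bv_{x\in X}\phi(x)\with k=\bv_{x\in X}\phi(x)\geq k.\]

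The heart of the matter is flatness, and the key step I would isolate is the identity
\[f^\ra(\phi)\pitchfork\psi=\phi\pitchfork(\psi\circ f)\]
valid for every coweight $\psi$ of $Y$, where $\psi\circ f\colon X\lra(\sQ,\alpha_L)$, $x\mapsto\psi(f(x))$, is a coweight of $X$ (it preserves $\sQ$-order because both $f$ and $\psi$ do). To prove the identity, expand the left-hand side, use that $\with$ distributes over joins to interchange the two suprema, and observe that for each fixed $x$,
\[\bv_{y\in Y}\psi(y)\with Y(y,f(x))=\psi(f(x)),\]
where $\leq$ is precisely the coweight condition for $\psi$ applied to the pair $(y,f(x))$, and $\geq$ follows by taking $y=f(x)$ together with $\psi(f(x))\with Y(f(x),f(x))\geq\psi(f(x))\with k=\psi(f(x))$. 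Hence the left-hand side collapses to $\bv_{x\in X}\phi(x)\with\psi(f(x))=\phi\pitchfork(\psi\circ f)$.

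Granting this, flatness of $f^\ra(\phi)$ is immediate. For coweights $\psi_1,\psi_2$ of $Y$, since the binary meet of coweights is taken pointwise in $\sQ^Y$ we have $(\psi_1\wedge\psi_2)\circ f=(\psi_1\circ f)\wedge(\psi_2\circ f)$ in $\sQ^X$, so
\[f^\ra(\phi)\pitchfork(\psi_1\wedge\psi_2)=\phi\pitchfork\big((\psi_1\circ f)\wedge(\psi_2\circ f)\big)=\big(\phi\pitchfork(\psi_1\circ f)\big)\wedge\big(\phi\pitchfork(\psi_2\circ f)\big)=\big(f^\ra(\phi)\pitchfork\psi_1\big)\wedge\big(f^\ra(\phi)\pitchfork\psi_2\big),\]
the middle equality being flatness of $\phi$. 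I do not expect a genuine obstacle here; the only points requiring a little care are verifying that $\psi\circ f$ is indeed a coweight of $X$ and that the interchange of the two suprema in the displayed computation is legitimate, both of which are routine consequences of $f$ preserving $\sQ$-order and of $\with$ distributing over arbitrary joins.
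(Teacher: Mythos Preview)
Your argument is correct: the identity $f^\ra(\phi)\pitchfork\psi=\phi\pitchfork(\psi\circ f)$ is exactly the right reduction, and your verifications of inhabitedness and of that identity are sound. The paper itself does not supply a proof of this proposition but merely cites \cite{LZZ2020,TLZ2014}, so there is nothing to compare against; your proof is the natural direct one and would serve well as a self-contained justification.
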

For each $\sQ$-ordered set $X$, write \[\CF X\] for the subset of $\CP X$ consisting of flat ideals. It is obvious that for each $a\in X$, the weight $X(-,a)$ is a flat ideal of $X$, so  the Yoneda embedding $\sy_X\colon X\lra\CP X$ factors through $\CF X$. The assignment $X\mapsto\CF X$ defines a \emph{saturated class of weights}  on $\QOrd$, see \cite[Proposition 4.5]{LZZ2020}. In other words, it gives rise to a submonad \[\mathbb{F}=(\CF,\sfm,\sy)\] of the monad \(\mathbb{P}=(\CP,\sfm,\sy).\)

A $\sQ$-ordered set $X$ is \emph{$\CF$-cocomplete} if every flat ideal of $X$ has a supremum. It is clear that $X$ is $\CF$-cocomplete if, and only if, the map \[\sy_X\colon X\lra\CF X, \quad x\mapsto X(-,x)\] has a left adjoint.

Separated and $\CF$-cocomplete $\sQ$-ordered sets are precisely the algebras of the monad $\mathbb{F}=(\CF,\sfm,\sy)$ (see e.g. \cite[Section 3]{LZ2020}) and provide an analogy of {\bf dcpo}s (= directed complete partially ordered sets) \cite{Gierz2003,Goubault} in the $\sQ$-valued context. Such a $\sQ$-ordered set  will be called an $\CF$-{\bf dcpo}. Since there exist different  extensions of directed lower sets, hence different extensions of {\bf dcpo}s to the enriched context  (see e.g. \cite{LZZ2020}), instead of   the term ``$\sQ$-{\bf dcpo}'', we adopt  ``$\CF$-{\bf dcpo}'' with the prefix $\CF$ indicating that such a ``$\sQ$-valued {\bf dcpo}'' is defined with respect to the class $\CF$ of flat weights. This remark also applies to $\CF$-domains defined below.

\begin{prop} \label{specialization Q-order is DC}
The specialization $\sQ$-order of a sober $\sQ$-topological space is $\CF$-cocomplete.  \end{prop}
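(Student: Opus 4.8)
The plan is to use sobriety of $X$ to convert a flat ideal of the specialization $\sQ$-order into a point of the space, and then to recognize that point as the supremum of the ideal. Write $\Omega X$ for the specialization $\sQ$-order of the sober space $X$. The first (and essentially the only substantive) step is the observation that every open set $\lambda\in\CO(X)$, viewed as a map $\Omega X\lra(\sQ,\alpha_L)$, is a coweight of $\Omega X$: by the very definition of the specialization $\sQ$-order one has $\Omega X(x,y)\leq\lambda(x)\ra\lambda(y)$, that is, $\lambda(x)\with\Omega X(x,y)\leq\lambda(y)$, for all $x,y\in X$. Consequently the pairing $\phi\pitchfork\lambda=\bv_{x\in X}\phi(x)\with\lambda(x)$ is available for every weight $\phi$ and every open set $\lambda$, and the defining conditions on a flat ideal translate into properties of the numbers $\phi\pitchfork\lambda$.

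Given a flat ideal $\phi$ of $\Omega X$, I would then define
\[p\colon\CO(X)\lra\sQ,\qquad p(\lambda)=\phi\pitchfork\lambda=\bv_{x\in X}\phi(x)\with\lambda(x),\]
and check that $p$ is a point of the $\sQ$-module $\CO(X)$, i.e.\ satisfies (pt1)--(pt4). Conditions (pt3) and (pt4) are immediate from the distributivity of $\with$ over arbitrary joins and its commutativity; condition (pt1) follows from inhabitedness of $\phi$, since $p(1_X)=\bigl(\bv_{x\in X}\phi(x)\bigr)\with 1\geq k\with 1=1$; and condition (pt2), namely $p(\lambda\wedge\mu)=p(\lambda)\wedge p(\mu)$, is exactly the flatness of $\phi$ applied to the two coweights $\lambda,\mu$ (the empty finite meet being covered by (pt1)). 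Since $X$ is sober, there is then a unique point $x_0$ of $X$ with $\lambda(x_0)=p(\lambda)=\phi\pitchfork\lambda$ for all $\lambda\in\CO(X)$.

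It remains to verify that $x_0=\sup\phi$ in $\Omega X$, i.e.\ that $\Omega X(x_0,y)=\CP\Omega X(\phi,\Omega X(-,y))=\bw_{x\in X}\phi(x)\ra\Omega X(x,y)$ for every $y\in X$. Expanding the right-hand side with the standard identities $a\ra(b\ra c)=(a\with b)\ra c$, $\bw_i(a_i\ra c)=(\bv_i a_i)\ra c$ and $a\ra\bw_i b_i=\bw_i(a\ra b_i)$, one obtains
\[\bw_{x\in X}\phi(x)\ra\Omega X(x,y)=\bw_{\lambda\in\CO(X)}\Big(\bv_{x\in X}\phi(x)\with\lambda(x)\Big)\ra\lambda(y)=\bw_{\lambda\in\CO(X)}(\phi\pitchfork\lambda)\ra\lambda(y);\]
substituting $\phi\pitchfork\lambda=\lambda(x_0)$ identifies this with $\bw_{\lambda\in\CO(X)}\lambda(x_0)\ra\lambda(y)=\Omega X(x_0,y)$, as wanted. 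Hence every flat ideal of $\Omega X$ has a supremum, so $\Omega X$ is $\CF$-cocomplete.

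The heart of the matter is the recognition that the open sets of $X$ are precisely coweights of $\Omega X$, so that the notion of flat ideal is exactly what makes $\phi\pitchfork(-)$ send (finite) meets of open sets to meets in $\sQ$---which is precisely what (pt2) demands. Once this is in place, the verification of (pt1), (pt3), (pt4) and the concluding computation are routine manipulations with the quantale multiplication and its implication operator, and I do not anticipate any real obstacle beyond keeping track of the order of the joins and meets in the last display. (One could alternatively package the argument via the adjunction $\CO\dashv\pt$, but the direct construction of the point $p$ above seems the most transparent route.)
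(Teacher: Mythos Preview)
Your proof is correct and follows essentially the same route as the paper's: define $p(\lambda)=\phi\pitchfork\lambda$, verify it is a point of $\CO(X)$ (the paper leaves this ``readily verified'' while you spell out (pt1)--(pt4)), invoke sobriety to obtain the unique $x_0$, and then carry out the same chain of quantale identities to identify $x_0$ as $\sup\phi$. The only discrepancy is cosmetic---your final computation runs from $\CP\Omega X(\phi,\Omega X(-,y))$ to $\Omega X(x_0,y)$ whereas the paper runs the equalities in the opposite direction---and your closing phrase that open sets ``are precisely coweights'' overstates the needed inclusion (they are coweights, not all coweights), but this does not affect the argument.
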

\begin{proof}Let $X$ be a sober $\sQ$-topological space. We   show that every flat ideal $\phi$ in $\Omega X$ has a supremum.   Since each open set of $X$ is a coweight of $\Omega X$, it is readily verified that \[p\colon \CO(X)\lra\sQ,\quad p(\lambda)=\phi\pitchfork\lambda
\] is a point of the $\sQ$-module $\CO(X)$. Thus, there is some $a$ of $X$ such that $p(\lam)=\lam(a)$ for all $\lam\in\CO(X)$. We claim that $a$ is a supremum of $\phi$. Indeed, for all $b\in X$, \begin{align*}\Omega X(a,b) &= \bw_{\lam\in\CO(X)}\lam(a)\ra\lam(b) \\ &= \bw_{\lam\in\CO(X)}\Big(\bv_{x\in X}\phi(x)\with\lambda(x)\ra\lam(b)\Big) \\ &= \bw_{x\in X}\Big( \phi(x)\ra \bw_{\lam\in\CO(X)} (\lambda(x)\ra\lam(b)) \Big) \\ &= \sub_X(\phi,\Omega X (-,b)), \end{align*} hence $a$ is a supremum of $\phi$ in $\Omega X$. \end{proof}
\begin{defn}\label{defn of F-domain}
A $\sQ$-ordered set $ X $ is  an $\CF$-domain if $X$ is separated and there exists a string of adjunctions \[\thda\dashv\sup\dashv\sy_X\colon X\lra\CF X.\]
 \end{defn}

 Said differently, a   $\sQ$-ordered set $ X $ is an $\CF$-domain if it is an $\CF$-{\bf dcpo} and  is continuous in the sense that for each $x\in X$,  there is a flat ideal $\thda x$ such that for all $\phi\in\CF X$, \[\sub_X(\thda x,\phi)=X(x,\sup\phi). \]

When flat ideals are taken as directed lower sets  in the $\sQ$-valued context, $\CF$-domains are  then an analogy of  domains (= continuous {\bf dcpo}s)   \cite{Gierz2003,Goubault}.

\begin{exmp} For each $\sQ$-ordered set $X$, the set $\CF X$ of flat ideas in $X$ with the  inclusion $\sQ$-order is an $\CF $-domain. This is an instance of \cite[Proposition 3.3]{LZ2020} or \cite[Theorem 4.1]{LZZ2020}. Actually,
for each $\sQ$-ordered set $X$  we have  a string of adjunctions \[\sy_X^\ra\dashv\sy_X^\leftarrow\dashv\sy_{\CF X}:\CF X\to\CF\CF X,\] where $\sy_X\colon X\lra\CF X$ is the unit of the monad $\mathbb{F}=(\CF,\sfm,\sy)$, hence $\CF X$ is an $\CF $-domain. In particular, the supremum of a flat ideal $\Phi$ of $\CF X$ is given by \[\sup\Phi =\sy_X^\leftarrow(\Phi)=\bv_{\phi\in\CF X} \Phi(\phi)\with\phi.\]
\end{exmp}

\begin{rem}[When $\sQ$ is a frame, III]\label{frame-valued flat ideal} Assume that $\sQ=(\sQ,\with)$ is a frame, i.e., $\with=\wedge$. It follows from  \cite[Proposition 3.11]{TLZ2014} that for each $\sQ$-ordered set $X$,   a weight $\phi$  is a flat ideal if, and only if, it is inhabited and \[\phi(x)\wedge\phi(y)=\bv_{z\in X}\phi(z) \wedge X(x,z)\wedge X(y, z)\] for all $x,y\in X$. So, in this case, a flat ideal of $X$ is precisely an \emph{\emph{ideal}} of $X$ in the sense of \cite[Definition 5.1]{LZ2007}; the fuzzy dcpos and fuzzy domains studied in \cite{LiuZhao,Yao10,Yao12,Yao16} are $\CF$-cocomplete $\sQ$-ordered sets and $\CF$-domains, respectively.
\end{rem}



Let $X$ be an $\CF$-cocomplete $\sQ$-ordered set. The \emph{way below $\sQ$-relation} (relative to flat ideals)   on $X$ refers to the $\sQ$-relation $w\colon X\times X\lra \sQ$ defined by
$$w (x,y)=\bigwedge_{\phi\in\mathcal{F}X}X(y,\sup\phi)\ra\phi(x).$$

We list below some basic properties of the way below $\sQ$-relation $w$, they are an instance of the properties of quantitative domains based on  a class of weights, see e.g. \cite{HW2011,HW2012,Was2009}. For all $x,y,z,u\in X$,
\begin{enumerate}[label=\rm(\roman*)] \setlength{\itemsep}{0pt}
	\item  $w(x,y)\leq X(x,y)$;
	\item  $w(y,z)\with X(x,y)\leq w(x,z)$, in particular, $w(-,z)$ is a weight of $X$;
	\item  $X(z,u)\with w(y,z)\leq w(y,u)$, in particular, $w(y,-)$ is a coweight of $X$.
\item If $X$ is an $\CF$-domain, then $w$ is   interpolative in the sense that \[w(x,y)=\bv_{z\in X}w(z,y)\with w(x,z).\]
\item If $X$ is a separated $\sQ$-ordered set of which every flat ideal has a supremum, then $X$ is a $\CF$-domain if, and only if, for all $x\in X$, the weight $w(-,x)$ is a flat ideal with supremum $x$. In this case, the left adjoint $\thda\colon X\lra\mathcal{F}X$ of $\sup\colon\mathcal{F}X\lra X$ is given by $\thda x=w(-,x)$.
\end{enumerate}

\begin{exmp}[When $\sQ$ is a frame, IV] \label{frame-valued V}
Assume that $\sQ=(\sQ,\with)$ is a frame, i.e., $\with=\wedge$. Then  $\sQ$ together with the canonical $\sQ$-order is an $\CF$-domain; that is, $(\sQ,\alpha_L)$ is an $\CF$-domain. By \cite[Example 3.14]{Zhang2007} and Example \ref{frame-valued flat ideal}, the assignment $x\mapsto x\vee(\id\ra0)$ defines a left adjoint of the map that sends each flat ideal of $(\sQ,\alpha_L)$ to its supremum. Thus, $(\sQ,\alpha_L)$ is an $\CF$-domain. The way below $\sQ$-relation on $(\sQ,\alpha_L)$ is given by \(w(y,x)= x\vee (y\ra0).\)

For a general quantale $\sQ$, $(\sQ,\alpha_L)$ may fail to be an $\CF$-domain, see Theorem \ref{[0,1] is domain} below. \end{exmp}


\begin{defn} (\cite{LZZ2020}) \label{def of Scott topology}
A fuzzy set $ \psi\colon X\lra\sQ $  of a  $\sQ$-ordered set $X$ is    Scott open if it is a coweight and     \[\psi(\sup\phi)\leq\phi\pitchfork\psi\]  for each flat ideal $ \phi $ of $X$  whenever $\sup\phi$ exists.
\end{defn}

Since for any weight $\phi$ and any coweight $\psi$ of $X$,  \[k\leq X(\sup\phi,\sup\phi)=\CP X(\phi, X(-,\sup\phi))\] whenever $\sup\phi$ exists, it  follows that \[\phi(x) \leq X(x,\sup\phi)\leq \psi(x)\ra\psi(\sup\phi)\]  for all $x\in X$, hence $\phi\pitchfork\psi\leq \psi(\sup\phi)$. Therefore, the inequality in Definition \ref{def of Scott topology} is actually an equality.


Given a $\sQ$-ordered set $ X $,   the family of  Scott open fuzzy sets form a $\sQ$-topology  on $X$ \cite[Proposition 5.2]{LZZ2020}, called the   Scott $\sQ$-topology on $ X $ and  denoted by $ \sigma(X) $. We write \[\Sigma X\] for the $\sQ$-topological space $(X,\sigma(X))$.

Now we state the main result of this section.
\begin{thm}\label{domain is sober} Let $\sQ$ be an integral and commutative quantale. Then the Scott $\sQ$-topology of each $\CF$-domain  is sober.
\end{thm}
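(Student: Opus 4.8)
The plan is to show that for an $\CF$-domain $X$ with Scott $\sQ$-topology, the unit $\eta_{\Sigma X}\colon \Sigma X\lra \pt\CO(\Sigma X)$ is a bijection; by Proposition on sobriety via $\eta_X$, this gives sobriety. Injectivity amounts to $T_0$, so first I would check that $\Sigma X$ is $T_0$, i.e. that the specialization $\sQ$-order of $\Sigma X$ is separated. This should follow because $X$ is separated and the specialization order recovers the original $\sQ$-order: for $x\in X$ the coweight $w(x,-)$ (or a Scott-open set built from it) separates points, using interpolativity of $w$ and the fact that $w(-,x)$ is a flat ideal with supremum $x$ (property (v) of the way below relation). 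Concretely I expect $\Omega(\Sigma X)=X$ as $\sQ$-ordered sets, so separatedness is immediate.

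The substantive part is surjectivity: given a point $p\colon \CO(\Sigma X)\lra\sQ$ I must produce $a\in X$ with $p(\lambda)=\lambda(a)$ for every Scott-open $\lambda$. The natural candidate is built by pulling $p$ back along $X$. Define $\phi\colon X\lra\sQ$ by something like $\phi(x)=p(\lambda_x)$ where $\lambda_x$ is a suitable Scott-open set associated to $x$ — the canonical choice being the smallest Scott-open coweight containing the appropriate data, or more robustly, set $\phi(x)=\bigwedge\{\text{values forced on } p\}$; in practice one shows the assignment $\lambda\mapsto p(\lambda)$ restricted through the Scott topology determines a flat ideal $\phi$ of $X$. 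I would verify: (a) $\phi$ is a weight of $X$ (using that $p$ preserves the $\sQ$-action and joins, and that $w(-,x)$-type opens interact correctly with the $\sQ$-order); (b) $\phi$ is inhabited (from $p(\top)=1\geq k$, integrality giving $k=1$... actually $k$ is the unit, integrality gives $k=\top_\sQ$, so $p(1_X)=1=k$ forces inhabitedness); (c) $\phi$ is flat, using that $p$ preserves finite meets and that $\phi\pitchfork\lambda=p(\lambda)$ for Scott-open $\lambda$, so flatness of $\phi$ against pairs of Scott-open coweights reduces to multiplicativity of $p$. Then since $X$ is an $\CF$-domain it is $\CF$-cocomplete, so $a\coloneqq\sup\phi$ exists.

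It remains to check $p(\lambda)=\lambda(a)$ for all Scott-open $\lambda$. Here is where continuity of $X$ (the full string $\thda\dashv\sup\dashv\sy_X$) is essential. For Scott-open $\lambda$ one has $\lambda(\sup\phi)=\phi\pitchfork\lambda$ by the equality remarked after Definition \ref{def of Scott topology}. So it suffices to show $\phi\pitchfork\lambda=p(\lambda)$ for every Scott-open $\lambda$. One inequality should come directly from the construction of $\phi$; the reverse should use the basic Scott-open sets: by continuity, the sets $\{x\mapsto w(b,x)\ \text{paired appropriately}\}$, i.e. coweights of the form $X(-,b)\pitchfork(-)$ composed with way-below data, generate the Scott topology, and on these basic opens one computes both sides explicitly. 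I would reduce the general Scott-open $\lambda$ to a join of basic opens of the form $r\with w(b,-)$ (using property (iii) that $w(b,-)$ is a coweight and checking Scott-openness via interpolation), then invoke that both $p$ and $\phi\pitchfork(-)$ preserve such joins and the $\sQ$-action.

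The main obstacle I anticipate is exactly this last reduction: verifying that the Scott $\sQ$-topology is \emph{generated} (under joins and tensors) by a manageable family of basic open coweights — the $\sQ$-valued analogue of the fact that $\{{\uparrow}x : x\in\text{basis}\}$ generates the Scott topology of a continuous dcpo — and that on this family the identity $\phi\pitchfork\lambda=p(\lambda)$ can be pinned down. Making precise which coweights of the form involving $w$ are Scott-open, and that they suffice, will require careful use of the interpolation property (iv) and the adjunction $\thda\dashv\sup$; everything else (the weight/flat/inhabited verifications for $\phi$, and uniqueness of $a$ from $T_0$) is routine, following the pattern of Proposition \ref{specialization Q-order is DC}.
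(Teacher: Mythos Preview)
Your approach is essentially the paper's: define $\phi(a)=p(w(a,-))$, prove $\phi$ is a flat ideal, and take $b=\sup\phi$. The ``basis lemma'' you anticipate as the main obstacle is exactly the key step; the paper proves it in the form of an interior formula: for every coweight $\psi$ of an $\CF$-domain $X$,
\[
\psi^\circ=\bigvee_{a\in X} w(a,-)\with\psi(a),
\]
using interpolation of $w$. With this in hand, the computation $\phi\pitchfork\psi=\bigvee_a p(w(a,-))\with\psi(a)=p(\psi^\circ)$ holds for \emph{all} coweights $\psi$, not only Scott-open ones.

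This matters because your flatness argument, as written, has a gap: you check $\phi\pitchfork(\psi_1\wedge\psi_2)=(\phi\pitchfork\psi_1)\wedge(\phi\pitchfork\psi_2)$ only for Scott-open $\psi_i$, whereas flatness requires it for arbitrary coweights. The repair is precisely the interior formula above together with the identity $(\psi_1\wedge\psi_2)^\circ=\psi_1^\circ\wedge\psi_2^\circ$; then $\phi\pitchfork(\psi_1\wedge\psi_2)=p((\psi_1\wedge\psi_2)^\circ)=p(\psi_1^\circ)\wedge p(\psi_2^\circ)=(\phi\pitchfork\psi_1)\wedge(\phi\pitchfork\psi_2)$. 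Everything else you outline (inhabitedness via integrality and $1_X=\bigvee_a w(a,-)$, the final verification $p(\psi)=\phi\pitchfork\psi=\psi(\sup\phi)$ for Scott-open $\psi$, uniqueness from separatedness via $w(-,x)=w(-,y)\Rightarrow x=y$) matches the paper.
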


\begin{lem}\label{basis for the Scott} Let $X$ be an $\CF$-domain and let $\psi\colon X\lra(\sQ,\alpha_L)$ be a $\sQ$-order-preserving map, viewed as a fuzzy subset of $X$. Then the interior of $\psi$ in $\Sigma X$ is given by \[\psi^\circ=\bv_{a\in X}  w(a,-)\with\psi(a).\] 
\end{lem}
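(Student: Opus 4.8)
The plan is to show the two inequalities $\psi^\circ \leq \bv_{a\in X} w(a,-)\with\psi(a)$ and $\bv_{a\in X} w(a,-)\with\psi(a) \leq \psi^\circ$ separately, using that $\psi^\circ$ is by definition the largest Scott open fuzzy set below $\psi$. First I would establish that the candidate $\mu \coloneqq \bv_{a\in X} w(a,-)\with\psi(a)$ actually lies below $\psi$: since $\psi$ is a coweight, $w(a,-)$ is a coweight (property (iii) of the way below $\sQ$-relation) and $w(a,b)\leq X(a,b)$ (property (i)), we get $w(a,b)\with\psi(a) \leq X(a,b)\with\psi(a)\leq\psi(b)$ for all $a,b$, so $\mu\leq\psi$. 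Then I would check that $\mu$ is Scott open. It is a join of the fuzzy sets $w(a,-)\with\psi(a)$, each of which is a coweight (a coweight scaled by a constant $\psi(a)\in\sQ$ is again a coweight because the $\sQ$-action on coweights is just multiplication), and a join of coweights is a coweight; to see the Scott-openness condition $\mu(\sup\phi)\leq\phi\pitchfork\mu$ for a flat ideal $\phi$, I would use the interpolation property (iv) of $w$ in an $\CF$-domain: for each $a$, $w(a,\sup\phi) = \bv_{z}w(z,\sup\phi)\with w(a,z)$, and $w(z,\sup\phi) = \bw_{\phi'}X(\sup\phi,\sup\phi')\ra\phi'(z) \leq X(\sup\phi,\sup\phi)\ra\phi(z)$... more directly, $w(z,\sup\phi)\leq\phi(z)$ by taking $\phi'=\phi$ in the definition of $w$ and using $k\leq X(\sup\phi,\sup\phi)$. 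This gives $w(a,\sup\phi)\with\psi(a) \leq \bv_z \phi(z)\with w(a,z)\with\psi(a) \leq \bv_z\phi(z)\with\mu(z) = \phi\pitchfork\mu$, and joining over $a$ yields $\mu(\sup\phi)\leq\phi\pitchfork\mu$. Hence $\mu$ is a Scott open fuzzy set below $\psi$, so $\mu\leq\psi^\circ$.

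For the reverse inequality $\psi^\circ\leq\mu$, I would use that $X$ is an $\CF$-domain, so by property (v) of the way below $\sQ$-relation, $w(-,b)$ is a flat ideal with $\sup w(-,b) = b$ for every $b\in X$. Since $\psi^\circ$ is Scott open and below $\psi$, evaluating the (equality form of the) Scott-openness condition at the flat ideal $\phi = w(-,b)$ gives
\[
\psi^\circ(b) = \psi^\circ(\sup w(-,b)) = w(-,b)\pitchfork\psi^\circ = \bv_{a\in X} w(a,b)\with\psi^\circ(a) \leq \bv_{a\in X}w(a,b)\with\psi(a) = \mu(b),
\]
using $\psi^\circ\leq\psi$ in the middle inequality. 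As $b$ is arbitrary, $\psi^\circ\leq\mu$, completing the proof.

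The main obstacle I anticipate is the verification that $\mu$ is Scott open — specifically getting the estimate $w(z,\sup\phi)\leq\phi(z)$ cleanly and then marshalling the interpolation property so that the join over the interpolating variable $z$ collapses correctly against the definition of $\mu$; the integrality of $\sQ$ (so that $k=1$ is the top element and $r\with s\leq r$) is what makes the distributivity manipulations of $\with$ over the joins go through without extra terms. The rest — that coweights are closed under the relevant operations, and that $\mu\leq\psi$ — is routine given properties (i) and (iii) of $w$ recalled in the excerpt.
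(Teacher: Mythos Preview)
Your proof is correct and follows essentially the same route as the paper's: both use interpolation of $w$ to establish Scott openness of the candidate $\mu=\bigvee_a w(a,-)\with\psi(a)$, and both obtain the reverse inequality by evaluating the Scott-openness condition of $\psi^\circ$ at the flat ideal $w(-,b)$. The paper packages the first half slightly differently --- it first records that each $w(a,-)$ is Scott open and then characterizes the Scott open coweights as exactly those satisfying $\psi=\bigvee_a w(a,-)\with\psi(a)$, from which the interior formula is immediate --- but the underlying computations are the same as yours. One small correction: integrality of $\sQ$ plays no role in this lemma; the distributivity of $\with$ over joins is a quantale axiom, and every estimate you use (including $w(z,\sup\phi)\leq\phi(z)$ and $w(a,b)\with\psi(a)\leq\psi(b)$) holds in any commutative unital quantale.
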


\begin{proof}  First,  making use of the interpolation property of the way below $\sQ$-relation, one sees that for each $x\in X$,  the fuzzy set \(w(x,-)\colon X\lra\sQ\) is  Scott open.

Next, we show that $\psi$ is open if, and only if, \[\psi=\bv_{a\in X}  w(a,-)\with\psi(a).\] The   conclusion  follows from this fact immediately. Sufficiency is clear since each $w(a,-)$ is open. To see the necessity, we check that  for all $x\in X$, \[\psi(x)=\bv_{a\in X} w(a,x)\with\psi(a).\]

Since $X$ is an $\CF$-domain, $w(-,x)$ is a flat ideal with $x$ as a supremum. Since $\psi$ is Scott open, then \begin{align*}\psi(x) &= \psi(\sup w(-,x))  =  w(-,x)\pitchfork \psi     =  \bv_{a\in X} w(a,x)\with\psi(a).\qedhere \end{align*}
\end{proof}

\begin{proof}[Proof of Theorem \ref{domain is sober}] Write $\mathcal{O}(X)$ for the set of open sets of the  $\sQ$-topological space $\Sigma X$, viewed as a $\sQ$-module. Let $p\colon \mathcal{O}(X)\lra  \sQ$ be a point of $\mathcal{O}(X)$. We need to show that there is a unique element $b$ of $X$ such that for each open set $\psi$ of $\Sigma X$, $p(\psi)=\psi(b)$.

Define $\phi\colon X\lra\sQ$ by \[\phi(a)=p(w(a,-)).\] We claim that $\phi$ is a flat ideal and the supremum of $\phi$ satisfies the requirement.

\textbf{Step 1}. $\phi$ is an  inhabited   weight of $X$. Since $\sQ$ is integral, the top element $1$ of $\sQ$ is the unit of $\&$, then $$1_X=1_X^\circ =\bv_{a\in X} w(a,-)\with1=\bv_{a\in X} w(a,-),$$  thus $$\bv_{a\in X}\phi(a) = \bv_{a\in X}p((w(a,-))=p\Big(\bv_{a\in X}w(a,-)\Big)=p(1_X)=1,$$ which shows that $\phi$ is inhabited. That $\phi$ is a weight follows from that for all $a,b\in X$, \[\phi(b)\with X(a,b)=p(w(b,-))\with X(a,b)= p(w(b,-)\with X(a,b))\leq p(w(a,-))=\phi(a). \]

\textbf{Step 2}. $\phi$ is flat. Since for each coweight $\psi$ of $X$, \begin{align*}\phi\pitchfork\psi &= \bv_{a\in X} p(w(a,-))\with \psi(a) \\ &=\bv_{a\in X} p( w(a,-)\with \psi(a)) \\ &= p\Big(\bv_{a\in X}  w(a,-)\with \psi(a)\Big)\\ &= p(\psi^\circ), \end{align*} it follows that for any coweights $\psi_1,\psi_2$   of $X$,  \begin{align*} \phi\pitchfork(\psi_1\wedge\psi_2) &= p((\psi_1\wedge\psi_2)^\circ) \\ &= p(\psi_1^\circ)\wedge p(\psi_2^\circ) \\ &= (\phi\pitchfork\psi_1)\wedge(\phi\pitchfork\psi_2), \end{align*} hence $\phi$ is flat.

\textbf{Step 3}. Let $b$ be the supremum of the flat ideal $\phi$. Then for each open set $\psi$ of $\Sigma X$, \begin{align*} p(\psi) &= p\Big(\bv_{a\in X}\  w(a,-)\with \psi(a)\Big)\\ & =\bv_{a\in X}  p(w(a,-)\with \psi(a))\\ & = \bv_{a\in X} \phi(a)\with \psi(a) \\ &=\phi\pitchfork\psi\\ & =\psi(\sup\phi) \\ &=\psi(b).\end{align*} The proof is completed. \end{proof}

\begin{exmp}[When $\sQ$ is a frame, V] \label{frame-valued VI} (\cite[Theorem 3.5]{Yao16})
Assume that $\sQ=(\sQ,\with)$ is a frame, i.e., $\with=\wedge$. By Example \ref{frame-valued V}, $(\sQ,\alpha_L)$ is an $\CF$-domain  and $w(x,y)= y\vee (x\ra0)$ for all $x,y\in\sQ$. Then it follows from Lemma \ref{basis for the Scott} that  $\Sigma(\sQ,\alpha_L)$ coincides with the Sierpi\'{n}ski space $\mathbb{S}$, hence $\mathbb{S}=\Sigma\Omega\thinspace\mathbb{S}$ in this case.   But, this is not true   for a general quantale, see Proposition \ref{Sierpinski is Scott} in the next section.
\end{exmp}

\begin{rem}In the case that  the quantale $\sQ$ is a frame, Theorem \ref{domain is sober} and Proposition \ref{specialization Q-order is DC} are first proved  in \cite{Yao12}. \end{rem}


\section{Some  examples in the case that $\sQ=([0,1],\with)$}
The aim of this section is to present some examples of $\CF$-domains and sober $\sQ$-topological spaces  when $\sQ$ is the interval $[0,1]$ endowed with a continuous t-norm. In particular, a necessary and sufficient condition  is obtained for $[0,1]$ together with the canonical $\sQ$-order to be an $\CF$-domain.

A \emph{left-continuous t-norm} on $[0,1]$ \cite{Klement2000} is a  map $\with\colon [0,1]^2\lra[0,1]$  that makes $([0,1],\with)$ into a  commutative and integral quantale.
A  left-continuous t-norm that is continuous with respect to the usual topology is called a \emph{continuous t-norm}  \cite{Klement2000}. Continuous t-norms play a decisive role in the BL-logic of H\'{a}jek \cite{Ha98}.

Basic   continuous t-norms and their implication operators are listed below:
\begin{enumerate}[label={\rm(\arabic*)}] \setlength{\itemsep}{0pt}
\item  The G\"{o}del t-norm:  \[ x\with y= \min\{x,y\}; \quad x\ra y=\begin{cases}
		1,&x\leq y,\\
		y,&x>y.
		\end{cases} \] The implication  operator $\ra$ of the G\"{o}del t-norm is  continuous  except at   $(x,x)$, $x<1$.

\item  The product t-norm:   \[ x\with_P y=xy; \quad x\ra y=\begin{cases}
		1,&x\leq y,\\
		y/x,&x>y.
		\end{cases} \] The implication operator  $\ra$
of the product t-norm is  continuous  except at   $(0,0)$.

\item  The {\L}ukasiewicz t-norm: \[ x\with_{\text \L} y=\max\{0,x+y-1\}; \quad x\ra y=\min\{1-x+y,1\}. \] The implication operator  $\ra$
    of the {\L}ukasiewicz t-norm
    is   continuous on $[0,1]^2$.
	\end{enumerate}

Let $\with $ be a continuous t-norm. An element $p\in [0,1]$ is   \emph{idempotent}  if $p\with p=p$.

\begin{prop}{\rm(\cite[Proposition 2.3]{Klement2000})} \label{idempotent}
Let $\&$ be a continuous t-norm on $[0,1]$ and $p$ be an idempotent element of $\&$. Then $x\with y=  x\wedge y $ whenever $x\leq p\leq y$.
 \end{prop}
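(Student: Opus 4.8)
The plan is to reduce everything to the behaviour of the single-variable map $g\colon [0,p]\lra[0,p]$, $g(t)=t\with p$, and to exploit that $g$ is a continuous, idempotent self-map of a connected interval.

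First I would record three elementary facts. Since $\with$ distributes over joins it is monotone in each variable; since $\sQ$ is integral, $t\with 1=t$; and since $1$ is the top element of $\sQ$, $t\with p\leq t\with 1=t$ for every $t$, so $g(t)\leq t$, and in particular $g$ does map $[0,p]$ into $[0,p]$, sending $0\mapsto 0$ and, using $p\with p=p$, $p\mapsto p$. The key observation is that $g$ is idempotent \emph{as a map}: for all $t$,
\[ g(g(t))=(t\with p)\with p = t\with(p\with p)=t\with p=g(t), \]
by associativity and idempotency of $p$. Consequently the image $g([0,p])$ is contained in the fixed-point set $\{s\in[0,p]\mid s\with p=s\}$.

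Next I would invoke connectedness. Because $\with$ is continuous, $g$ is continuous, so $g([0,p])$ is a connected subset of $[0,1]$, i.e. an interval; it contains $g(0)=0$ and $g(p)=p$, hence $g([0,p])=[0,p]$. Combined with the previous paragraph, this forces every $t\in[0,p]$ to be a fixed point of $g$, that is,
\[ t\with p = t \qquad\text{for all } 0\leq t\leq p. \]

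Finally, given $x\leq p\leq y$, monotonicity of $\with$ yields $x = x\with p \leq x\with y \leq x\with 1 = x$, so $x\with y = x$; and since $x\leq y$ we have $x\wedge y = x$, whence $x\with y = x\wedge y$. The only step requiring care is the connectedness argument — one must note that continuity of the t-norm is precisely what makes the image of $[0,p]$ under $g$ an interval, so that it cannot avoid the fixed points — but beyond that the reasoning is purely formal.
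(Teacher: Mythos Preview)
Your argument is correct. The paper itself does not prove this proposition: it is quoted as \cite[Proposition 2.3]{Klement2000} and used without further justification. The proof you give --- observing that $g(t)=t\with p$ is a continuous idempotent self-map of $[0,p]$ with $g(0)=0$ and $g(p)=p$, so by connectedness its image is all of $[0,p]$ and hence every $t\leq p$ is a fixed point --- is essentially the standard one found in the cited reference. There are no gaps; the only place continuity of the t-norm is genuinely needed is exactly where you flag it, in the connectedness step.
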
 It follows immediately  that $y\ra x=x$ whenever  $x< p\leq y$ for some idempotent   $p$. Another consequence of  Proposition \ref{idempotent} is that for any idempotent elements $p, q$    with $p<q$,  the restriction of $\with $ to $[p,q]$, which is also denoted by $\with$,  makes $[p,q]$ into a commutative quantale with $q$ being the unit element.

A continuous t-norm on $[0,1]$ is said to be \emph{Archimedean} if it has no idempotent element  other than $0$ and $1$. It is well-known  that if $\with$ is a continuous Archimedean t-norm, then the quantale  $([0,1],\with)$ is either isomorphic to  $([0,1],\with_{\text \L})$ or to $([0,1],\with_{P})$, see e.g.  \cite{Klement2000,Mostert1957}.

\begin{thm} {\rm(\cite{Klement2000,Mostert1957})}
\label{ordinal sum} Let $\with $ be a continuous t-norm. Then for each non-idempotent element $c$ of $([0,1],\with)$, there exist idempotent elements $c^{-}$ and $c^{+}$ such that $c^-<c<c^+$ and  the quantale  $([c^{-},c^{+}],\with)$ is either isomorphic to  $([0,1],\with_{\L})$ or to $([0,1],\with_{P})$.
\end{thm}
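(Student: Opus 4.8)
The plan is to reduce the statement to the known structure theory of continuous t-norms quoted just above, namely that a continuous Archimedean t-norm yields a quantale isomorphic to $([0,1],\with_{\L})$ or $([0,1],\with_P)$, together with Proposition \ref{idempotent} on the behaviour of $\with$ across an idempotent. Fix a non-idempotent element $c\in(0,1)$. Let $I$ be the set of idempotent elements of $([0,1],\with)$; it contains $0$ and $1$, so it is nonempty on both sides of $c$. First I would set
\[
c^- = \bv\{p\in I\mid p\leq c\}, \qquad c^+ = \bw\{p\in I\mid p\geq c\}.
\]
The first key step is to check that $c^-$ and $c^+$ are themselves idempotent. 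This follows from continuity of $\with$: if $p_n\uparrow c^-$ with each $p_n$ idempotent, then $c^-\with c^- = \lim_n\lim_m (p_n\with p_m) = \lim_n p_n = c^-$, using that $p_n\with p_m = \min\{p_n,p_m\}$ by Proposition \ref{idempotent} (both lie between the idempotents $p_{\min}$ and $1$, or one can argue directly). The same argument applies to $c^+$ using a decreasing sequence. Hence $c^-,c^+\in I$, and since $c\notin I$ we get the strict inequalities $c^-<c<c^+$.

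The second key step is that there is no idempotent strictly between $c^-$ and $c^+$: any such $p$ would, if $p\leq c$, contradict maximality of $c^-$, and if $p>c$, contradict minimality of $c^+$ (note $p\neq c$ since $c$ is not idempotent). By the remark following Proposition \ref{idempotent}, the restriction of $\with$ to $[c^-,c^+]$ makes this interval into a commutative quantale with unit $c^+$ and bottom $c^-$; rescaling the order-isomorphism $[c^-,c^+]\cong[0,1]$ transports $\with|_{[c^-,c^+]}$ to a continuous t-norm on $[0,1]$ (continuity and the t-norm axioms are preserved, as $[c^-,c^+]$ is a closed subinterval and $\with$ is continuous). This transported t-norm has no idempotent other than $0$ and $1$, precisely because $[c^-,c^+]$ contains no idempotent of $\with$ other than its endpoints; hence it is Archimedean.

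Finally, invoking the classification of continuous Archimedean t-norms (\cite{Klement2000,Mostert1957}), the quantale $([0,1],\with|_{[c^-,c^+]})$ — equivalently $([c^-,c^+],\with)$ — is isomorphic to $([0,1],\with_{\L})$ or to $([0,1],\with_P)$, which is the assertion. The main obstacle I anticipate is the rigorous verification that $c^-$ and $c^+$ are idempotent: one must handle the limit carefully, since across two distinct idempotents $\with$ agrees with $\min$ only in the ``mixed'' regime $x\leq p\leq y$, so the cleanest route is to observe $p\with q=\min\{p,q\}$ for \emph{any} two idempotents $p,q$ (apply Proposition \ref{idempotent} with the smaller one as the idempotent threshold) and then pass to the limit using continuity of $\with$. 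Everything else is bookkeeping about suprema/infima of the closed set $I$ and transport of structure along an affine rescaling.
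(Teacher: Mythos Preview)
The paper does not prove Theorem~\ref{ordinal sum}; it is stated as a known structure result and cited from \cite{Klement2000,Mostert1957} without proof. Consequently there is no proof in the paper to compare your proposal against.

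That said, your outline is the standard route to this classical ordinal-sum decomposition: take $c^-$ and $c^+$ as the supremum and infimum of idempotents below and above $c$, use continuity of $\with$ to show the set of idempotents is closed (hence $c^-,c^+$ are idempotent and strictly bracket $c$), observe there is no idempotent strictly between, and then invoke the classification of continuous Archimedean t-norms on the rescaled interval. The one point you flag---that $p\with q=\min\{p,q\}$ for any two idempotents---is exactly right and follows from Proposition~\ref{idempotent} applied with the smaller of the two as the threshold; with that in hand the limit argument is clean. So your proposal is correct and matches the textbook proof, but the paper itself simply quotes the result.
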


\begin{prop}\label{[0,1] is domain} Let $\with $ be a continuous t-norm and let $\sQ=([0,1],\with)$. Then, the following are equivalent:
\begin{enumerate}[label=\rm(\arabic*)] \setlength{\itemsep}{0pt}
	\item  The $\sQ$-ordered set $([0,1],\alpha_L)$ is an $\CF$-domain.
	\item  For each non-idempotent element $c\in[0,1]$, the quantale $([c^-,c^+],\with)$ is isomorphic to $([0,1],\with_P)$ whenever $c^->0$.
\end{enumerate}  \end{prop}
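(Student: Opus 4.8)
The plan is to analyze when $([0,1],\alpha_L)$ is an $\CF$-domain by reducing the global question to a local one on the ``blocks'' $[c^-,c^+]$ determined by the ordinal-sum decomposition (Theorem \ref{ordinal sum}), and then to settle the Archimedean cases directly. The key structural observation I would establish first is that the flat ideals of $([0,1],\alpha_L)$, and the way-below $\sQ$-relation $w$, can be computed ``block by block''. For an idempotent $p$, Proposition \ref{idempotent} forces $y\ra x=x$ whenever $x<p\leq y$; this should let me show that a flat ideal $\phi$ either is supported below some idempotent or, more importantly, that the value $w(y,x)$ only depends on the block containing $x$ and whether $y$ lies in that block or above it. Concretely I expect $w(y,x)=1$ when $y\leq x$, and for $y>x$ the value $w(y,x)$ is governed by the quantale $([c^-,c^+],\with)$ where $c$ is (just above) $x$; by Example \ref{frame-valued V} the Gödel/frame blocks behave one way, and by Example \ref{frame-valued V}'s failure remark the non-frame blocks are where the obstruction lives.

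First I would recall from Example \ref{frame-valued V} that when $\sQ$ is a frame the object $(\sQ,\alpha_L)$ is always an $\CF$-domain with $w(y,x)=x\vee(y\ra 0)$, and I would isolate the exact feature that makes $([0,1],\alpha_L)$ an $\CF$-domain in general: by property (v) of the way-below $\sQ$-relation, $([0,1],\alpha_L)$ is an $\CF$-domain iff for every $x$ the weight $w(-,x)$ is a flat ideal with supremum $x$. So the proof splits into (a) verifying $\sup w(-,x)=x$ always (this is the ``enough elements way below'' condition and should follow from continuity of $\with$), and (b) checking flatness of $w(-,x)$, which is the delicate part. I expect flatness to hold automatically on the Gödel (frame) part and on the product-type blocks, and to fail precisely on a {\L}ukasiewicz-type block that sits immediately above a strictly positive idempotent $c^-$. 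The intuition: in a {\L}ukasiewicz block $[c^-,c^+]$ with $c^->0$, the bottom $c^-$ is nilpotent-like relative to the block but is not $0$ of the whole quantale, and this mismatch destroys flatness of the candidate ideal $w(-,x)$ for $x$ just above $c^-$.

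The technical heart, then, is the computation of $w(-,x)$ for $x$ in the interior of a non-idempotent block and the flatness test $\phi\pitchfork(\psi_1\wedge\psi_2)=(\phi\pitchfork\psi_1)\wedge(\phi\pitchfork\psi_2)$. I would first reduce to the Archimedean case: using Theorem \ref{ordinal sum}, for a fixed non-idempotent $c$ with block $[c^-,c^+]$, I would show that whether $w(-,x)$ is flat for $x\in(c^-,c^+)$ depends only on the isomorphism type of $([c^-,c^+],\with)$ and on whether $c^->0$. For the product block ($\cong([0,1],\with_P)$) with $c^-=0$, i.e. essentially the product t-norm near $0$, flatness should go through because the product t-norm's implication is continuous off $(0,0)$ and $0$ of the block equals $0$ of $\sQ$. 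For a {\L}ukasiewicz block, or a product block with $c^->0$, I would exhibit explicit coweights $\psi_1,\psi_2$ (concentrated near $c^-$) witnessing failure of the flatness identity, the same phenomenon already flagged in Example \ref{frame-valued V}. Conversely, when every non-idempotent $c$ with $c^->0$ has product-type block, I would assemble the local flatness verifications into a global one, using that the contributions of distinct blocks to $\phi\pitchfork\psi$ are ``decoupled'' by the frame-like behaviour of $\with$ across idempotents (Proposition \ref{idempotent}).

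The main obstacle I anticipate is the flatness computation itself: unlike the frame case, $\phi\pitchfork\psi=\bv_x\phi(x)\with\psi(x)$ involves the non-idempotent multiplication, and checking the meet-preservation identity requires understanding how $\bv_x w(x,\cdot)\with\psi_i(x)$ interacts with binary meet of coweights when $\psi_1,\psi_2$ straddle a block boundary. I would handle this by the block decomposition above, so that within each block the relevant identity becomes the statement that a specific product-type (or {\L}ukasiewicz-type) quantale has $(\sQ,\alpha_L)$ an $\CF$-domain — reducing Proposition \ref{[0,1] is domain} to the two Archimedean prototypes, where it can be checked by hand using the explicit formulas for $\ra$ listed before Proposition \ref{idempotent}. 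The direction (1)$\Rightarrow$(2) is then the contrapositive: a {\L}ukasiewicz block over a positive idempotent yields a non-flat $w(-,x)$, so by property (v) $([0,1],\alpha_L)$ fails to be an $\CF$-domain.
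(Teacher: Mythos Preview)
Your overall strategy --- reduce to the Archimedean blocks via the ordinal-sum decomposition and test, using property~(v) of the way-below relation, when $w(-,x)$ is a flat ideal --- is a legitimate alternative to the paper's route. The paper proceeds differently: it first produces an explicit formula for the \emph{smallest} flat ideal $d(x)$ with supremum at least $x$ (Lemma~\ref{smallest idea}, prepared by Lemmas~\ref{fidealinA}--\ref{f2}), and then, via Theorem~\ref{Characterization of adjoints}, reduces the question to whether the assignment $x\mapsto d(x)$ preserves the $\sQ$-order. That reduction is more tractable than yours, because $d(x)$ has a closed form, whereas $w(-,x)$ is defined as a meet over \emph{all} flat ideals and cannot be computed until those have been classified --- so your plan would in effect need Lemmas~\ref{fidealinA}--\ref{smallest idea} as well.

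More seriously, your case analysis contains a genuine error. You assert that a product-type block with $c^->0$ also witnesses failure, but the proposition says precisely the opposite: condition~(2) holds exactly when every block sitting over a positive idempotent is of product type. The obstruction is not whether the block's bottom coincides with the global~$0$; it is the behaviour of the implication at that bottom. For a product-type block $[p,q]$ with $p>0$ one has $t\ra p=p$ for every $t\in(p,q]$, whereas for a {\L}ukasiewicz-type block one has $t\ra p>p$ strictly for $t\in(p,q)$. It is this strict inequality that the paper uses to obtain $X(t,p)>\mathcal{F}X(d(t),d(p))$, breaking $\sQ$-order preservation of $d$; for product blocks the same comparison is an equality and nothing goes wrong. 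Consequently your proposed ``explicit coweights $\psi_1,\psi_2$ concentrated near $c^-$'' cannot exist for a product block with $c^->0$, and your $(1)\Rightarrow(2)$ argument, as written, would establish a statement strictly stronger than~(2) --- hence false.
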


In order to prove the conclusion, we present four lemmas first.
\begin{lem} {\label{fidealinA}}
Let $ \with  $ be a continuous Archimedean t-norm and let $\sQ=([0,1],\with)$. Then $\phi\colon[0,1]\lra[0,1] $ is a flat ideal of the $\sQ$-ordered set $ ([0,1],\alpha_L) $ if, and only if, $ \phi=\alpha_L(-,a)$ for some $ a\in[0,1] $.
\end{lem}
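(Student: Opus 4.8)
The ``if'' direction is immediate: $\alpha_L(-,a)$ is the representable weight $([0,1],\alpha_L)(-,a)$, which is a flat ideal for every $a\in[0,1]$ (as recalled in Section 4). So the plan is to prove the ``only if'' direction. Given a flat ideal $\phi$ of $X\coloneqq([0,1],\alpha_L)$, I would set $a\coloneqq\phi(1)$ and aim to show $\phi(x)=x\ra a$ for every $x$. Since $\sQ$ is integral, $k=1$, so by Example \ref{sup in Q} the supremum of $\phi$ exists and equals $\sup\phi=\bv_{x}\phi(x)\with x=\phi(1)=a$; hence the general inequality $\phi(x)\leq X(x,\sup\phi)$, valid for any weight, already gives $\phi(x)\leq x\ra a$. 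What remains is the reverse inequality $x\ra a\leq\phi(x)$, and this is where flatness and the Archimedean hypothesis come in. I would also use at the outset that $\phi$ is order-reversing for the underlying order and, being inhabited, satisfies $\bv_{x}\phi(x)=1$, whence $\phi(0)=1$.

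The first use of flatness feeds the coweights $\psi_1=\id_{[0,1]}$ and $\psi_2=$ the constant map with value $e\in[0,1]$ into the defining identity $\phi\pitchfork(\psi_1\wedge\psi_2)=(\phi\pitchfork\psi_1)\wedge(\phi\pitchfork\psi_2)$ (both maps are coweights of $X$, the second because $\alpha_L(c,c)=1$). Using $\phi\pitchfork\psi_1=\bv_{x}\phi(x)\with x=a$ and $\phi\pitchfork\psi_2=e\with\bv_{x}\phi(x)=e$, this becomes
\[\bv_{y\in[0,1]}\phi(y)\with(y\wedge e)=a\wedge e\qquad(e\in[0,1]).\]
From here I would deduce $\phi(y)=1$ for all $y<a$: if $\phi(x_1)<1$ for some $x_1\in(0,a)$, pick $e\in(x_1,a)$; splitting the join at $x_1$ and using that $\phi$ is order-reversing and $\leq1$, the left-hand side is at most $x_1\vee(\phi(x_1)\with e)$, while the right-hand side equals $e$. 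But for an Archimedean t-norm, $\phi(x_1)<1$ and $e>0$ force $\phi(x_1)\with e<e$, so $x_1\vee(\phi(x_1)\with e)<e$, a contradiction. Combined with $\phi(0)=1$ this yields $\phi\equiv1$ on $[0,a)$.

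The second use of flatness takes $\psi_1=\alpha_L(x_0,-)=(x_0\ra-)$ (a coweight for every $x_0$) and $\psi_2=$ the constant map with value $x_0\ra a$. Then $\phi\pitchfork\psi_1=\phi(x_0)$ and $\phi\pitchfork\psi_2=(x_0\ra a)\with\bv_{x}\phi(x)=x_0\ra a$, and since $x_0\ra-$ preserves binary meets, $(x_0\ra y)\wedge(x_0\ra a)=x_0\ra(y\wedge a)$; so flatness together with $\phi(x_0)\leq x_0\ra a$ gives
\[\phi(x_0)=\bv_{y\in[0,1]}\phi(y)\with\bigl(x_0\ra(y\wedge a)\bigr).\]
If $a>0$, restricting the join to $y<a$ (where $\phi(y)=1$ and $y\wedge a=y$) gives $\phi(x_0)\geq\bv_{y<a}(x_0\ra y)$; since $\with$ is a continuous Archimedean t-norm, the increasing map $y\mapsto x_0\ra y$ is continuous, so $\bv_{y<a}(x_0\ra y)=x_0\ra a$ and hence $\phi(x_0)\geq x_0\ra a$. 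If $a=0$, then $y\wedge a=0$ for all $y$ and the display collapses to $\phi(x_0)=(x_0\ra0)\with\bv_{y}\phi(y)=x_0\ra0$ directly. In either case $\phi(x_0)=x_0\ra a$, i.e.\ $\phi=\alpha_L(-,a)$.

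I expect the main obstacle to be spotting which coweights make the flatness identity informative: the identity map and the constant maps (which pin $\phi$ down to $1$ on $[0,a)$), and then the representables $\alpha_L(x_0,-)$ (which reconstruct $\phi$ from its supremum). Once those are chosen, the two structural inputs --- the Archimedean property (to exclude values $<1$ on $[0,a)$) and the continuity of each $y\mapsto x_0\ra y$ (to evaluate $\bv_{y<a}(x_0\ra y)$) --- do the rest, the only wrinkle being the trivial degenerate case $a=0$.
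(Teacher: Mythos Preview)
Your argument is correct and complete. The paper's proof, however, is of a different kind: it simply records the lemma as ``a direct consequence of Corollary 3.14 and Theorem 3.18 in \cite{LZZ2020}'' and gives no argument of its own. Your route instead works entirely within the present framework, testing the flatness identity against well-chosen coweights --- the identity map and constants to force $\phi\equiv1$ on $[0,a)$, then the representable $\alpha_L(x_0,-)$ paired with a constant to recover $\phi(x_0)$ from $a=\sup\phi=\phi(1)$ --- and invoking the two analytic features peculiar to continuous Archimedean t-norms (that $u<1$ and $v>0$ force $u\with v<v$, and that each map $y\mapsto x_0\ra y$ is continuous on $[0,1]$; note the latter fails for the G\"{o}del t-norm, so the hypothesis is genuinely used). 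Your approach is more elementary and self-contained, and it makes transparent exactly where the Archimedean assumption enters; the paper's approach is shorter on the page but defers the content to the companion article.
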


\begin{proof}A direct consequence of Corollary 3.14 and Theorem 3.18 in \cite{LZZ2020}. \end{proof}

\begin{lem}\label{f1} Let $\with $ be a continuous t-norm and let $\sQ=([0,1],\with)$.	Let $ \phi $ be a flat ideal of  $([0,1],\alpha_L)$. If $c\in[0,1]$ is   idempotent and $c\leq\phi(c)$, then   $\phi(c) $ is idempotent.
\end{lem}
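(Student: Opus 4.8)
The plan is to feed the flatness condition with two well-chosen coweights of the $\sQ$-ordered set $([0,1],\alpha_L)$. Put $d\coloneqq\phi(c)$, so that $c\le d$ by hypothesis. First take the representable coweight $\psi_1\coloneqq\alpha_L(c,-)$, i.e. $\psi_1(x)=c\ra x$; since $X(a,-)$ is always a coweight (by the triangle inequality for $\alpha_L$) and $\phi\pitchfork X(a,-)=\phi(a)$, we get $\phi\pitchfork\psi_1=\phi(c)=d$. Second take the constant map $\psi_2\equiv d$, which is a coweight (as $d\ra d=1$, $\sQ$ being integral), and for which $\phi\pitchfork\psi_2=\big(\bv_{x}\phi(x)\big)\with d=1\with d=d$, using that $\phi$ is inhabited and $\sQ$ is integral. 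Flatness of $\phi$ then yields
\[
\bv_{x\in[0,1]}\phi(x)\with\big((c\ra x)\wedge d\big)\;=\;\phi\pitchfork(\psi_1\wedge\psi_2)\;=\;(\phi\pitchfork\psi_1)\wedge(\phi\pitchfork\psi_2)\;=\;d.
\]

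The second step is to estimate the left-hand join by splitting on whether $x\ge c$ or $x<c$, and this is where idempotency of $c$ enters. For $x\ge c$ one has $c\ra x=1$, so the corresponding term is $\phi(x)\with d$; moreover the weight inequality $\phi(x)\with(c\ra x)\le\phi(c)$ forces $\phi(x)\le d$, whence $\bv_{x\ge c}\phi(x)\with d\le d\with d$. For $x<c$, idempotency of $c$ together with Proposition~\ref{idempotent} gives $c\ra x=x$, and since $x<c\le d$ we have $(c\ra x)\wedge d=x$, so the corresponding terms are the $\phi(x)\with x\le x<c$; hence $\bv_{x<c}\phi(x)\with x\le c$. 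Combining the displayed equality with these two bounds gives $d=A\vee B$ where $A\coloneqq\bv_{x\ge c}\phi(x)\with d\le d\with d\le d$ and $B\coloneqq\bv_{x<c}\phi(x)\with x\le c$.

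Finally I would conclude by a short case split. If $d=c$, then $d$ is idempotent by the hypothesis on $c$. If $d>c$, then $B\le c<d$, so the equality $d=A\vee B$ forces $A=d$; combined with $A\le d\with d\le d$ this yields $d\with d=d$. Either way $\phi(c)=d$ is idempotent. The only point requiring genuine care — and the place where continuity of the t-norm is used — is the identity $c\ra x=x$ for $x<c$ with $c$ idempotent, i.e. the consequence of Proposition~\ref{idempotent}; everything else is routine manipulation of joins and of the adjunction $p\with q\le r\iff q\le p\ra r$.
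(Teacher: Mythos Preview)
Your proof is correct and follows essentially the same route as the paper: the same two coweights $\alpha_L(c,-)$ and the constant $\phi(c)$ are fed into flatness, and the resulting join is split at $x=c$ using the consequence of Proposition~\ref{idempotent} that $c\ra x=x$ for $x<c$. The only cosmetic difference is in the endgame: where you case-split on $d=c$ versus $d>c$, the paper observes directly that $c=c\with c\le d\with d$ (since $c$ is idempotent and $c\le d$), so $c\vee(d\with d)=d\with d$ and hence $d\le d\with d$ without a case distinction.
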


\begin{proof} Consider the following coweights of $ ([0,1],\alpha_L)$: $\psi_1\equiv\phi(c) $ and $\psi_2  =\alpha_L(c,-)$.

Since $ \phi\pitchfork\psi_1=\phi(c) =\phi\pitchfork\psi_2$ and   \[ (\psi_1\wedge\psi_2)(x)=\begin{cases} 	c\ra x,&0\leq x<c,\\ 	\phi(c),&c\leq x\leq 1, 	\end{cases} \] it follows that \begin{align*} \phi(c)&	= \phi\pitchfork(\psi_1\wedge\psi_2)\\ &= \Big(\bv_{x\in[0,c)}\phi(x)\with (c\ra x)\Big)\vee\Big(\bv_{x\in[c,1]}\phi(x)\with \phi(c)\Big)\\ &= \Big(\bv_{x\in[0,c)}(c\ra x)\Big)\vee(\phi(c)\with \phi(c)) \quad\quad (\forall x\in[0,c), c\ra x\leq c\leq \phi(c)\leq\phi(x))\\ 	 	&\leq c\vee(\phi(c)\with \phi(c))\\ 	&=\phi(c)\with \phi(c),   \quad\quad (\text{$c\leq \phi(c)$}) 	\end{align*} hence $\phi(c)$ is idempotent.
\end{proof}

For a non-idempotent element $c$ in $([0,1],\with)$, write $\sQ^c$ for the quantale obtained by restricting $\with$ to $[c^-,c^+]$; that is, $$\sQ^c=([c^-,c^+],\with).$$

\begin{lem}\label{f2} Let $\with $ be a continuous t-norm, let $\sQ=([0,1],\with)$, and let $ \phi $ be a flat ideal of   $([0,1],\alpha_L)$. If $c\in[0,1]$ is  non-idempotent and $\sup\phi\geq c$, then  the map \[\rho\colon [c^-,c^+]\lra[c^-,c^+], \quad \rho(t)= c^+\wedge\phi(t)\] is a flat ideal of the $\sQ^c$-ordered set $([c^-,c^+],\alpha_L^c)$, where $\alpha_L^c$ refers to the canonical $\sQ^c$-order on $[c^-,c^+]$.
\end{lem}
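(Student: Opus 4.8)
The plan is to verify directly that $\rho$ satisfies the two defining conditions of a flat ideal of $([c^-,c^+],\alpha_L^c)$: that it is an inhabited weight, and that it is flat against pairs of coweights. Throughout I will use Proposition~\ref{idempotent} repeatedly, since $c^-$ and $c^+$ are idempotent: in particular $x\with y = x\wedge y$ whenever $x\le c^- \le y$ or $x\le c^+\le y$, and $c^+$ acts as a unit on $[c^-,c^+]$ while the implication $\alpha_L^c$ inside $[c^-,c^+]$ agrees with $\alpha_L$ on arguments drawn from that interval (one checks $s\with t\le u$ with $s,t,u\in[c^-,c^+]$ is equivalent to $t\le s\ra u$, and $s\ra u\ge c^-$ here). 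The first thing to record is that $\phi$ restricted to $[c^-,c^+]$ already takes values $\ge c^-$ on that interval: since $c^-$ is idempotent and $c^-\le t$ for $t\in[c^-,c^+]$, we get $\phi(t)\ge \phi(c^-)\with\alpha_L(t,c^-)$... more simply, $\phi(c^-)\ge c^-$ would follow if $c^-\le\phi(c^-)$, which is exactly the hypothesis $\sup\phi\ge c$ combined with Lemma~\ref{f1}'s circle of ideas — so I would first pin down that $\phi(c^-)$ is an idempotent $\ge c^-$, hence $\phi(t)\ge\phi(c^-)\ge c^-$ for all $t\in[c^-,c^+]$ by antitonicity of $\phi$ in the underlying order. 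Consequently $\rho(t)=c^+\wedge\phi(t)$ genuinely lands in $[c^-,c^+]$.

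Next, $\rho$ is a weight: for $s,t\in[c^-,c^+]$ I must show $\rho(t)\with\alpha_L^c(s,t)\le\rho(s)$. Since $\phi$ is a weight of $([0,1],\alpha_L)$ we have $\phi(t)\with(t\ra s)\le\phi(s)$, and $\alpha_L^c(s,t)=t\ra s$ as noted. Meeting with $c^+$ and using that $c^+\wedge-$ distributes suitably (here is where Proposition~\ref{idempotent} makes $\with$ behave like $\wedge$ near the idempotent $c^+$) gives $\rho(t)\with\alpha_L^c(s,t)\le c^+\wedge\phi(s)=\rho(s)$. For inhabitedness I need $\bv_{t\in[c^-,c^+]}\rho(t)\ge c^+$; since $\sup\phi\ge c$, flatness/inhabitedness of $\phi$ forces $\bv_{t}\phi(t)$ to be large, and in fact $\phi(c^-)\ge c^-$ together with monotonicity will give $\bv_{t\in[c^-,c^+]}\phi(t)\ge c^+$ — the cleanest route is: $\sup\phi\ge c$ means (by the supremum formula and Example~\ref{sup in Q}-type reasoning for $(\sQ,\alpha_L)$, where $\sup\phi=\bv_x\phi(x)\with x$) that $\bv_x\phi(x)\with x\ge c$, and since $\phi(x)\with x\le x$... one instead argues $\alpha_L(-,\sup\phi)=\phi$ is impossible to use directly, so I would use that $\phi(\sup\phi)\ge k=1$ forces $\phi$ to be $1$ somewhere $\ge$-below... more carefully: $\sup\phi\ge c$ gives $1=\phi(\sup\phi)\le\phi(c^+)$ (weight property plus $\alpha_L(\sup\phi,c^+)\ge k$ since $\sup\phi\le c^+$ is false in general) — this needs care, and I expect this inhabitedness bookkeeping to be the fiddliest point. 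The safe statement is $\bv_{t\in[c^-,c^+]}\rho(t)\ge\rho(c^+)=c^+\wedge\phi(c^+)$ and then to show $\phi(c^+)\ge c^+$ using $\phi(c^+)\ge\phi(\sup\phi\vee c^+)\with\ldots$; I will extract $\phi(c^+)\ge c^+$ from $\sup\phi\ge c$ via the identity $X(-,a)\pitchfork\psi=\psi(a)$ applied with $\psi=\alpha_L(c^+,-)$ as a coweight, obtaining $\phi\pitchfork\alpha_L(c^+,-)=\ldots\ge$ the relevant bound.

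Finally, flatness of $\rho$: given coweights $\sigma_1,\sigma_2$ of $([c^-,c^+],\alpha_L^c)$, I extend each to a coweight $\bar\sigma_i$ of $([0,1],\alpha_L)$ — the natural extension being $\bar\sigma_i(x)=\sigma_i(c^+)$ for $x\ge c^+$ and $\bar\sigma_i(x)=\alpha_L(c^-,x)\with\sigma_i(c^-)$, or simply $\bar\sigma_i(x)=\sigma_i(\max\{c^-,\min\{x,c^+\}\})$ suitably corrected so that it remains a coweight of the big space (this requires checking $\bar\sigma_i(y)\with\alpha_L(y,x)\le\bar\sigma_i(x)$ using Proposition~\ref{idempotent} on both sides of the bracket $[c^-,c^+]$). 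Then $\rho\pitchfork_{[c^-,c^+]}\sigma_i = \phi\pitchfork\bar\sigma_i$ up to a controlled truncation by $c^+$, because the join $\bv_{t\in[c^-,c^+]}\rho(t)\with\sigma_i(t)$ equals $\bv_{x\in[0,1]}\phi(x)\with\bar\sigma_i(x)$ once one checks that $x\notin[c^-,c^+]$ contributes nothing new (values of $\phi$ below $c^-$ are absorbed, values above $c^+$ are capped). Given that, $\rho\pitchfork(\sigma_1\wedge\sigma_2)=\phi\pitchfork(\bar\sigma_1\wedge\bar\sigma_2)=(\phi\pitchfork\bar\sigma_1)\wedge(\phi\pitchfork\bar\sigma_2)=(\rho\pitchfork\sigma_1)\wedge(\rho\pitchfork\sigma_2)$, using flatness of $\phi$ in the middle. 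The main obstacle is making the restriction–extension correspondence between coweights/weights on $[c^-,c^+]$ and on $[0,1]$ rigorous — precisely, showing the extensions are honest coweights and that $\pitchfork$ is compatible with truncation at the idempotent endpoints; everything else is a routine application of Proposition~\ref{idempotent}.
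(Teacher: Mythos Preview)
Your overall architecture matches the paper's: check that $\rho$ lands in $[c^-,c^+]$, that it is a weight, that it is inhabited, and then deduce flatness by extending coweights of $([c^-,c^+],\alpha_L^c)$ to coweights of $([0,1],\alpha_L)$ and invoking flatness of $\phi$. The extension step you sketch is exactly the paper's Steps~2 and~3.

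The genuine gap is in inhabitedness. You aim to show $\phi(c^+)\ge c^+$, but this need not hold: from $\sup\phi\ge c$ one only gets $\phi(c^+)\ge\phi(1)=\sup\phi\ge c$, and for instance $\phi=\alpha_L(-,c)$ gives $\phi(c^+)=c^+\ra c<c^+$. The paper works at the \emph{other} endpoint. Using Example~\ref{sup in Q} (with $k=1$) one has $\phi(1)=\sup\phi\ge c>c^-$, hence $\phi(c^-)\ge\phi(1)>c^-$ since $\phi$ is antitone in the usual order; now Lemma~\ref{f1} (which you invoke, but only in your well-definedness paragraph) says $\phi(c^-)$ is idempotent, and since there is no idempotent strictly between $c^-$ and $c^+$, this forces $\phi(c^-)\ge c^+$. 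Therefore $\rho(c^-)=c^+\wedge\phi(c^-)=c^+$, and inhabitedness is immediate. The same chain $\phi(t)\ge\phi(c^+)\ge\phi(1)\ge c>c^-$ also gives well-definedness directly, with no appeal to Lemma~\ref{f1} needed there; you have the right lemma but applied it in the wrong step.
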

\begin{proof}
Since $\phi(c^+)\geq\phi(1)=\sup\phi=c >c^-$, it follows that $\phi(x)\geq c^-$ for all $x\in[c^-,c^+]$, hence $\rho$ is well-defined. That $\rho$ is a weight of $([c^-,c^+],\alpha_L^c)$ is clear,  it remains to check that it is inhabited and flat.

\textbf{Step 1}. Since $\phi(c^-)>c^-$, it follows from Lemma \ref{f1} that $\phi(c^-)$ is idempotent, hence $\phi(c^-)\geq c^+$ and  $\rho(c^-)=c^+$, which shows that $\rho$ is inhabited.

\textbf{Step 2}.	For each coweight $\psi$ of $([c^-,c^+],\alpha_L^c)$, the map $ \widetilde{\psi}\colon [0,1]\lra[0,1] $, given by
	\[ \widetilde{\psi}(x)=\begin{cases}
	x,&x\in[0,c^-),\\
	\psi(x),&x\in[c^-,c^+],\\
	\psi(c^+),&x\in(c^+,1],
	\end{cases} \] is a coweight of $ ([0,1],\alpha_L)$.
The verification is straightforward and  omitted here.

\textbf{Step 3}. $\rho$ is flat; that is, for any  coweights $\psi_1,\psi_2$ of   $([c^-,c^+],\alpha_L^c)$, \[\rho\pitchfork(\psi_1\wedge\psi_2) =(\rho\pitchfork\psi_1)\wedge(\rho\pitchfork\psi_2).\]
	
First, a routine calculation shows that $\rho\pitchfork\psi=\phi\pitchfork\widetilde{\psi}$ for all coweight $\psi$ of $([c^-,c^+],\alpha_L^c)$.
 Thus,	\begin{align*}	\rho\pitchfork(\psi_1\wedge\psi_2) &=\phi\pitchfork(\widetilde{\psi_1\wedge\psi_2})\\ &=\phi\pitchfork(\widetilde{\psi_1}\wedge\widetilde{\psi_2})\\ 	&=(\phi\pitchfork\widetilde{\psi_1})\wedge(\phi\pitchfork\widetilde{\psi_2}) &(\phi\text{ is flat})\\ 	&=(\rho\pitchfork \psi_1)\wedge(\rho\pitchfork\psi_2),
	\end{align*}
which completes the proof.
\end{proof}

\begin{lem}\label{smallest idea}
Let $\with $ be a continuous t-norm and let $\sQ=([0,1],\with)$. For each $ x\in[0,1] $,  define $d(x)\colon[0,1]\lra[0,1]$  by \[   d(x)(t)=\begin{cases}t\ra0,&x=0,\\
	1,&x>0,\thinspace t=0,\\
	x^+\wedge(t\ra x),&x>0,\thinspace t>0.
	\end{cases} \] Then $d(x)$  is the smallest flat ideal of $ ([0,1],\alpha_L) $ with supremum larger than or equal to $x$.
\end{lem}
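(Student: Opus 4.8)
The plan is to split the verification into two parts: first that $d(x)$ is itself a flat ideal of $([0,1],\alpha_L)$ with $\sup d(x)\geq x$, and second that $d(x)$ sits below every flat ideal $\phi$ with $\sup\phi\geq x$. The case $x=0$ is essentially Example \ref{frame-valued V} applied to the (trivial, two-element) sub-quantale structure, but more directly: $d(0)=\alpha_L(-,0)\vee(\id\ra0)$ is, by the interpolation data there, exactly $w(-,0)$-type weight, and a direct check shows it is the least flat ideal at all — its supremum is $d(0)(1)=1\ra 0$, which however need not be $\geq 0$ in a helpful way unless $0$ is being used as the bottom; in any case $\sup d(0)\geq 0$ trivially since $0$ is the bottom element. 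So I would dispatch $x=0$ quickly and concentrate on $x>0$.

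For $x>0$: first I would reduce to the Archimedean sub-quantale $\sQ^x=([x^-,x^+],\with)$ when $x$ is non-idempotent (and to a degenerate one-point or interval argument when $x$ is idempotent). The formula $d(x)(t)=x^+\wedge(t\ra x)$ for $t>0$ says precisely that $d(x)$ is the "truncation to $[x^-,x^+]$" of the principal weight $\alpha_L(-,x)=(t\ra x)$, together with value $1$ at $t=0$. I would verify the weight inequality $d(x)(s)\with \alpha_L(t,s)\leq d(x)(t)$ directly using $(s\to x)\with(t\to s)\leq (t\to x)$ and the fact that $x^+$ is idempotent, so meeting with $x^+$ is compatible with $\with$ by Proposition \ref{idempotent}. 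Inhabitedness: $d(x)(0)=1\geq k$ when $x>0$ (using integrality, $k=1$). For flatness, the cleanest route is to mimic Step 2--3 of Lemma \ref{f2} in reverse: show that for any coweight $\psi$ of $([0,1],\alpha_L)$ one has $d(x)\pitchfork\psi = (\alpha_L(-,x)\pitchfork\psi)\vee\psi(0)$ — or better, recognize $d(x)$ as $\widetilde{\rho}$ for the principal (hence flat) ideal $\rho=\alpha_L^x(-,x)$ of $\sQ^x$ via the $\widetilde{(-)}$ construction of Lemma \ref{f2} Step 2, extended below $x^-$ by the identity and above $x^+$ by the constant $x^+\wedge(x^+\to x)$. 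Then flatness of $d(x)$ follows from flatness of the principal ideal $\alpha_L^x(-,x)$ in $\sQ^x$ (Lemma \ref{fidealinA}) by the same transfer computation as in Lemma \ref{f2}. Finally $\sup d(x)=d(x)(k)=d(x)(1)=x^+\wedge(1\to x)=x^+\wedge x=x\geq x$.

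For minimality: let $\phi$ be any flat ideal with $\sup\phi\geq x$. I must show $d(x)\leq\phi$ pointwise, i.e. $d(x)(t)\leq\phi(t)$ for all $t$. At $t=0$: $\phi(0)\geq\bv_t\phi(t)\with\,?$—rather, inhabitedness plus the weight property give $\phi(0)\geq\phi(t)\with(t\to 0)$ for all $t$; combined with $\sup\phi\geq x$ one gets $\phi(0)\geq k=1=d(x)(0)$ when $x>0$ — here I'd use that $\sup\phi\geq x>0$ forces $\phi$ to "reach below" every positive level, via Lemma \ref{f1}/\ref{f2} telling us $\phi(x^-)$ is idempotent and $\geq x^+$. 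For $t>0$: since $\phi$ is a weight, $\phi(t)\geq\phi(1)\with\alpha_L(t,1)$ is the wrong direction; instead $\phi(t)\geq \sup\phi \to$ something — the right inequality is $\phi(t)\geq X(t,\sup\phi)\to(\text{stuff})$... more carefully, from the weight property $\phi(1)\with\alpha_L(t,1)\leq\phi(t)$ we only get $\phi(1)\leq\phi(t)$, no. The correct statement: for a weight, $\phi(s)\with\alpha_L(t,s)\leq\phi(t)$; taking $s=1$ gives $\phi(1)\with(t\to 1)=\phi(1)\leq\phi(t)$, so $\phi(t)\geq\sup\phi\geq x$, and also the weight property is vacuous upward. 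So $\phi(t)\geq x$ always, and $\phi(t)\geq x\geq x^+\wedge x=d(x)(x)$... I need $\phi(t)\geq x^+\wedge(t\to x)$. When $t\leq x$ this is $\phi(t)\geq x^+$, which follows because $\phi(t)\geq\phi(x)$ (weight, $x\to t$... no, need $t\leq x$ gives $\alpha_L(x,t)$ large? $x\to t$ is not $k$ unless $x\leq t$) — here I invoke Lemma \ref{f1}: $t\leq x^-$ (idempotent) with $\phi(t)\geq\phi(x^-)\geq x> t$... wait $\phi(t)\geq t$? Not automatically. The genuinely careful argument: use that $\phi$ restricted à la Lemma \ref{f2} gives a flat ideal $\rho$ of $\sQ^x$ with $\sup\rho\geq x$, and in the Archimedean quantale $\sQ^x$ Lemma \ref{fidealinA} says $\rho=\alpha_L^x(-,a)$ for some $a\geq x$, so $\rho(t)=t\to a\geq t\to x$, i.e. $x^+\wedge\phi(t)\geq t\to x$ for $t\in[x^-,x^+]$, which is exactly $\phi(t)\geq d(x)(t)$ on that interval; then extend to $t<x^-$ using $y\to x=x$ for $y<x^-\leq x$ (consequence of Proposition \ref{idempotent}) so $d(x)(t)=x^+\wedge x=x\leq\phi(t)$, and to $t>x^+$ by monotonicity of $\phi$.

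The main obstacle I anticipate is the bookkeeping at the boundary levels $0$, $x^-$ and $x^+$ — specifically, squeezing $\phi(0)\geq 1$ and $\phi(t)\geq x$ for small $t$ out of the hypothesis $\sup\phi\geq x$ using Lemmas \ref{f1} and \ref{f2}, and handling the idempotent-$x$ case (where $x^-=x^+=x$ and the interval $\sQ^x$ degenerates) separately. The flatness transfer itself should go through cleanly once the $\widetilde{(-)}$ correspondence of Lemma \ref{f2} is invoked in both directions.
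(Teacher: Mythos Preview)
Your overall plan matches the paper's proof closely: for minimality you restrict $\phi$ to the Archimedean sub-quantale $\sQ^x=([x^-,x^+],\with)$ via Lemma~\ref{f2}, invoke Lemma~\ref{fidealinA} to get $\rho=\alpha_L^x(-,a)$, argue $a\geq x$, and then extend the comparison outside $[x^-,x^+]$. Your justification of $a\geq x$ via $\sup\rho=\rho(x^+)=x^+\wedge\phi(x^+)\geq x^+\wedge x=x$ is in fact a little cleaner than the paper's argument by contradiction. The paper also declares the verification that $d(x)$ itself is a flat ideal with supremum $x$ to be ``straightforward'' and does not spell it out, so your sketch there is more than is needed.

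There is, however, one concrete slip in your boundary bookkeeping. For $t\in(0,x^-)$ you write ``$y\to x=x$ for $y<x^-\leq x$'' and conclude $d(x)(t)=x^+\wedge x=x$. This has the implication formula backwards: the consequence of Proposition~\ref{idempotent} is that $y\to x=x$ when $x<p\leq y$ for some idempotent $p$, not when $y$ lies below such a $p$. For $t<x^-\leq x$ one simply has $t\to x=1$, so $d(x)(t)=x^+\wedge 1=x^+$, and you must show $\phi(t)\geq x^+$, not merely $\phi(t)\geq x$. You already have the ingredient for this earlier in your sketch: $\phi(x^-)$ is idempotent and $\geq x^+$ (Lemma~\ref{f1}, or Step~1 of Lemma~\ref{f2}), and $\phi$ is decreasing, so $\phi(t)\geq\phi(x^-)\geq x^+$ for all $t\leq x^-$. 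With that correction the argument goes through. (The $t=0$ case you worry about is immediate: $\phi$ is decreasing and inhabited, so $\phi(0)\geq\bigvee_t\phi(t)=1$; no appeal to Lemmas~\ref{f1}--\ref{f2} is needed there.)
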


\begin{proof} The verification of that $d(x)$ is a flat ideal with supremum $x$ is straightforward. Now we check that for any flat ideal $\phi$ of  $([0,1],\alpha_L)$ with $\sup\phi\geq x$, it holds that $d(x)\leq \phi $.
The conclusion is obvious if $x$ is idempotent. Assume that $x$ is not idempotent.   By Lemma \ref{f2}, the map \[\rho\colon [x^-,x^+]\lra[x^-,x^+], \quad \rho(t)= x^+\wedge\phi(t)\] is a flat ideal of the $\sQ^x$-ordered set $([x^-,x^+],\alpha_L^x)$. 	Since  the quantale $([x^-,x^+],\with)$ is   isomorphic to $[0,1]$ endowed with a continuous Archimedean t-norm, it follows from Lemma \ref{fidealinA} that there is some $b\in[x^-,x^+]$ such that  \[\rho(t)=\alpha_L^x(t,b)=x^+\wedge(t\ra b)\] for all $t\in[x^-,x^+]$.

We claim that $b\geq x$, hence $d(x)(t)\leq \rho(t)\leq\phi(t)$ for all $t\in[x^-,x^+]$. 	Suppose on the contrary that $b<x$. Take some $z\in(x,x^+)$ such that $z\ra b<x$. Then $x> \rho(z)=x^+\wedge\phi(z)$, hence $\phi(z)=\rho(z)<x$, contradicting that $\phi(z)\geq\phi(1)=\sup\phi\geq x$ (the equality holds by Example \ref{sup in Q}).

Finally, since   \[d(x)(t)= x^+= \rho(x^-)\leq\phi(t)\] for all $t\in(0,x^-)$  and \[d(x)(t)= x\leq\phi(1)\leq\phi(t)\] for all $t\in(x^+,1]$, it follows that $d(x)\leq\phi$, as desired.
\end{proof}

\begin{proof}[Proof of Proposition \ref{[0,1] is domain}] Write $X$ for the $\sQ$-ordered set $([0,1],\alpha_L)$ and for each $x\in X$, let $d(x)$ be the smallest flat ideal of $X$ with supremum   larger than or equal to $x$, as given in Lemma \ref{smallest idea}. Then,
by Theorem \ref{Characterization of adjoints}, it is sufficient to show that the map \(d\colon X\lra \mathcal{F}X \)
preserves $\sQ$-order if, and only if,
$\with $ satisfies the condition stated in (2). We check the only-if-part here and leave the  if-part  to the reader.

Suppose on the contrary that  $\with $ does not satisfy that condition. Then there exist idempotent elements $p,q>0$  such that the restriction of $\with$ on $[p,q]$ is isomorphic to the \L ukasiewicz t-norm. Pick $t\in(p,q)$. Then    \begin{align*}X(t,p)& = t\ra p\\ &>q\ra p\\ &\geq \bw_{x\in(0,1]}\big(q\wedge(x\ra t)\ra p\wedge(x\ra p)\big)\\  &=\mathcal{F}X(d(t), d(p)), \end{align*} which contradicts that \(d\colon X\lra \mathcal{F}X \)  preserves  $\sQ$-order. \end{proof}

\begin{prop}Let $\with$ be a continuous t-norm on $[0,1]$  and let $\sQ=([0,1],\with)$. If for each non-idempotent element $c\in[0,1]$, the quantale $([c^-,c^+],\with)$ is isomorphic to $([0,1],\with_P)$ whenever $c^->0$, then the $\sQ$-topological space $\Sigma([0,1],\alpha_L)$ is sober.\end{prop}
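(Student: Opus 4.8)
The plan is to deduce this from Theorem \ref{domain is sober} by showing that the hypothesis on $\with$ forces $([0,1],\alpha_L)$ to be an $\CF$-domain, and then observing that $\sQ=([0,1],\with)$ is integral. Indeed, the stated condition on $\with$ is exactly condition (2) of Proposition \ref{[0,1] is domain}, so that proposition immediately gives that $([0,1],\alpha_L)$ is an $\CF$-domain. Since a continuous t-norm makes $([0,1],\with)$ into a commutative and integral quantale, Theorem \ref{domain is sober} applies to the Scott $\sQ$-topology of any $\CF$-domain, and in particular to $\Sigma([0,1],\alpha_L)$. Hence $\Sigma([0,1],\alpha_L)$ is sober.

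So the proof is essentially a one-line citation chain: first invoke Proposition \ref{[0,1] is domain}, condition (2) $\Rightarrow$ condition (1), to conclude that $X\coloneqq([0,1],\alpha_L)$ is an $\CF$-domain; then invoke Theorem \ref{domain is sober} with this $X$ to conclude that $\Sigma X$ is sober. I would write it as: "By Proposition \ref{[0,1] is domain}, the hypothesis on $\with$ implies that the $\sQ$-ordered set $([0,1],\alpha_L)$ is an $\CF$-domain. Since $\sQ=([0,1],\with)$ is a commutative and integral quantale, Theorem \ref{domain is sober} shows that the Scott $\sQ$-topology of $([0,1],\alpha_L)$ is sober; that is, $\Sigma([0,1],\alpha_L)$ is sober."

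There is essentially no obstacle here — the statement is a corollary, and the only thing to double-check is that the phrasing of the hypothesis matches condition (2) of Proposition \ref{[0,1] is domain} verbatim (it does), and that "integral and commutative quantale" in the hypothesis of Theorem \ref{domain is sober} is satisfied, which holds for any continuous t-norm by the definition recalled at the start of this section. One might optionally remark that this proposition could equally be stated as a corollary, but since the paper presents it as a proposition I would simply give the short two-sentence proof above. If desired, one could additionally note for the reader's convenience that the way below $\sQ$-relation and the Scott open sets of $([0,1],\alpha_L)$ can be made explicit via Lemma \ref{basis for the Scott} and Lemma \ref{smallest idea}, but this is not needed for the proof.
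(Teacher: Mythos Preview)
Your proposal is correct and matches the paper's own proof exactly: the paper simply writes ``This follows from a combination of Proposition \ref{[0,1] is domain} and Theorem \ref{domain is sober}.'' Your additional remark that $([0,1],\with)$ is integral and commutative for any continuous t-norm is a helpful clarification but is implicit in the paper's version.
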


\begin{proof}This follows from a combination of Proposition \ref{[0,1] is domain} and Theorem \ref{domain is sober}. \end{proof}

A celebrated result of Scott \cite{Scott72} says that if $X$ is an injective $T_0$ topological space, then the topology of $X$ is precisely the Scott topology of its specialization order. It is shown in \cite{Yao16} that this is also true in the $\sQ$-enriched setting if the quantale $\sQ$ is a frame. But, this is not true in general, as we   see below.

\begin{prop}\label{Sierpinski is Scott} Let $\with $ be a continuous t-norm and let $\sQ=([0,1],\with)$. Let $\mathbb{S}$ be the Sierpi\'{n}ski $\sQ$-topological space. Then $\mathbb{S}=\Sigma\Omega\thinspace\mathbb{S}$ if, and only if, $\with$ is the G\"{o}del t-norm. \end{prop}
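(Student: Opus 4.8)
The plan is to prove the two implications separately: the ``if'' part is a one-line consequence of earlier material, and for ``only if'' I will, for every non-Gödel continuous t-norm, write down a single Scott open fuzzy set that is not an open set of $\mathbb{S}$. If $\with$ is the Gödel t-norm then $\with=\wedge$, so $\sQ=([0,1],\with)$ is a frame and Example \ref{frame-valued VI} gives $\mathbb{S}=\Sigma\Omega\,\mathbb{S}$ outright. For the converse I argue by contraposition. Note first that $\Omega\,\mathbb{S}=(\sQ,\alpha_L)$, that $\id$ is a Scott open fuzzy set of $(\sQ,\alpha_L)$ — indeed $\id$ is a coweight and, by Example \ref{sup in Q}, $\phi\pitchfork\id=\bigvee_t\phi(t)\with t=\sup\phi=\id(\sup\phi)$ for every flat ideal $\phi$ — so that $\CO(\mathbb{S})\subseteq\sigma(\Omega\,\mathbb{S})$ always and the issue is whether the inclusion is strict; and that, since $[0,1]$ is a frame and $(a\wedge b)\with r=(a\with r)\wedge(b\with r)$ holds trivially in a chain, the description of $\CO(\mathbb{S})$ from Section 3 applies: $\CO(\mathbb{S})=\{(a\vee(\id\with r))\wedge b\mid a,b,r\in[0,1]\}$.

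Now suppose $\with$ is not the Gödel t-norm, so $\with\neq\wedge$ and $([0,1],\with)$ has a non-idempotent element; fix one, $c$, with idempotents $c^-<c<c^+$ bounding its summand as in Theorem \ref{ordinal sum}. The witness will be
\[\psi\;:=\;\alpha_L(c,-)\wedge c^+,\qquad\text{that is}\qquad \psi(t)=(c\to t)\wedge c^+.\]
Since $\alpha_L(c,-)=(\sQ,\alpha_L)(c,-)$ is a coweight of $(\sQ,\alpha_L)$ and a meet of coweights is a coweight, $\psi$ is a coweight. That $\psi\notin\CO(\mathbb{S})$ is quick: $\psi(c)=(c\to c)\wedge c^+=c^+$, while $\psi(0)<c^+$ because $\psi(0)=c^+$ would force $c^+\with c=0$, contradicting $c^+\with c=c\wedge c^+=c>0$ (using Proposition \ref{idempotent} and that $c$ is non-idempotent). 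Hence if $\psi$ were $(a\vee(\id\with r))\wedge b$, evaluating at $t=c$ and using $c\with r\le c<c^+$ would give $a\ge c^+$ and $b\ge c^+$, whence $\psi(0)=a\wedge b\ge c^+$, a contradiction.

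The remaining, and decisive, step is that $\psi$ is Scott open with respect to $\alpha_L$; this is where the flat-ideal lemmas enter and where I expect the real work to lie. By Definition \ref{def of Scott topology} and the equality noted just after it, it suffices to show $\psi(\sup\phi)\le\phi\pitchfork\psi$ for every flat ideal $\phi$ of $(\sQ,\alpha_L)$ (suprema exist by Example \ref{sup in Q}). Flatness of $\phi$ splits the meet, $\phi\pitchfork(\alpha_L(c,-)\wedge c^+)=(\phi\pitchfork\alpha_L(c,-))\wedge(\phi\pitchfork c^+)$; the first factor equals $\phi(c)$ by the identity $\phi\pitchfork X(a,-)=\phi(a)$, and the second equals $c^+$ since $\phi$ is inhabited, so $\phi\pitchfork\psi=\phi(c)\wedge c^+$, while $\psi(\sup\phi)=(c\to\sup\phi)\wedge c^+$. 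Thus everything reduces to $(c\to\sup\phi)\wedge c^+\le\phi(c)$. By Lemma \ref{smallest idea}, $d(\sup\phi)$ is the least flat ideal with supremum $\ge\sup\phi$, hence $d(\sup\phi)\le\phi$ and $\phi(c)\ge d(\sup\phi)(c)$, so it is enough to prove $(c\to z)\wedge c^+\le d(z)(c)$ for all $z\in[0,1]$. Plugging in the explicit formula for $d(z)$ from Lemma \ref{smallest idea}, this becomes $\min(c\to z,c^+)\le z^+$ when $z>0$ (and is trivial when $z=0$), which I would verify by a short case distinction on $z$: when $z\ge c$ we have $c\to z=1$ by integrality and $z^+\ge c^+$; when $z<c$ and $z^+\ge c^+$ the bound $\min(c\to z,c^+)\le c^+\le z^+$ is immediate; and when $z<c$ and $z^+\le c^-$ Proposition \ref{idempotent} gives $c\to z=z$, so $\min(c\to z,c^+)=z\le z^+$. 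With this inequality in hand $\psi\in\sigma(\Omega\,\mathbb{S})\setminus\CO(\mathbb{S})$, hence $\mathbb{S}\neq\Sigma\Omega\,\mathbb{S}$, completing the contrapositive and the proof.
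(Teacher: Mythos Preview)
Your argument for the necessity direction has a genuine gap: the proposed witness $\psi(t)=(c\to t)\wedge c^{+}$ is \emph{not} Scott open in general. The failure is in your final case analysis of $(c\to z)\wedge c^{+}\le d(z)(c)$, which you reduce to $\min(c\to z,c^{+})\le z^{+}$. Your three cases tacitly assume $z$ is non-idempotent (so that $z^{+}$ is meaningful and the trichotomy $z^{+}\ge c^{+}$ / $z^{+}\le c^{-}$ is exhaustive), and they do not handle the idempotent value $z=c^{-}$. There the claim can be false: if the block $[c^{-},c^{+}]$ is of \L ukasiewicz type then $c\to c^{-}=c^{+}-c+c^{-}>c^{-}$, while $d(c^{-})(c)=c^{-}$, because the map $\phi$ with $\phi(0)=1$ and $\phi(t)=c^{-}$ for $t>0$ is a flat ideal with $\sup\phi=c^{-}$. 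So the needed inequality becomes $c\to c^{-}\le c^{-}$, which fails. (Your appeal to Proposition~\ref{idempotent} in the third case requires an idempotent strictly between $z$ and $c$; for $z=c^{-}$ there is none.)

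Concretely, let $\with$ be G\"odel on $[0,\tfrac12]$ and \L ukasiewicz on $[\tfrac12,1]$ and take $c=\tfrac34$, so $c^{-}=\tfrac12$, $c^{+}=1$, and your $\psi$ is simply $\alpha_L(\tfrac34,-)$. The map $\phi$ with $\phi(0)=1$ and $\phi(t)=\tfrac12$ for $t>0$ is a flat ideal with $\sup\phi=\tfrac12$; then $\psi(\sup\phi)=\tfrac34\to\tfrac12=\tfrac34$, but $\phi\pitchfork\psi=\phi(\tfrac34)\wedge1=\tfrac12$, so $\psi$ is not Scott open. For this t-norm \emph{every} non-idempotent element has $c^{-}=\tfrac12>0$, so the argument cannot be salvaged by a better choice of $c$. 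The paper's witness $\lambda(x)=((c\to c^{-})\vee(c\to x))\wedge c^{+}$ differs from yours only by the extra floor $c\to c^{-}$; this single term forces $\lambda(x)=\lambda(0)$ for all $x\le c^{-}$, which is precisely what Lemma~\ref{Scott open set of dL} requires and what your $\psi$ lacks.
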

A lemma first.
\begin{lem}\label{Scott open set of dL} Let $\with $ be a continuous t-norm and let $\sQ=([0,1],\with)$.  Then a coweight $\psi$ of $([0,1],d_L)$ is Scott open   if, and only if, for all $x\in (0,1]$,   $ \psi(x)>x^+ $ implies $ \psi(x)=\psi(0)$.	
\end{lem}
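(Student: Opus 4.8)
The plan is to test the candidate coweight $\psi$ only against the \emph{smallest} flat ideals $d(x)$ constructed in Lemma~\ref{smallest idea}, and then to evaluate $d(x)\pitchfork\psi$ directly.

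First I would set up the reduction. The $\sQ$-ordered set $([0,1],\alpha_L)$ is cocomplete (Example~\ref{sup in Q}), so every flat ideal has a supremum, and hence a coweight $\psi$ is Scott open iff $\psi(\sup\phi)\leq\phi\pitchfork\psi$ for \emph{every} flat ideal $\phi$. By Lemma~\ref{smallest idea}, $d(x)$ is a flat ideal with $\sup d(x)=x$; so if $\psi$ is Scott open then $\psi(x)\leq d(x)\pitchfork\psi$ for all $x$. Conversely, for an arbitrary flat ideal $\phi$ with $y=\sup\phi$, minimality of $d(y)$ gives $\phi\geq d(y)$, hence $\phi\pitchfork\psi\geq d(y)\pitchfork\psi\geq\psi(y)=\psi(\sup\phi)$. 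So $\psi$ is Scott open $\iff$ $\psi(x)\leq d(x)\pitchfork\psi$ for all $x\in[0,1]$. The case $x=0$ is automatic, since $d(0)(0)=0\ra0=1$ forces $d(0)\pitchfork\psi\geq\psi(0)$; hence only $x\in(0,1]$ matters.

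Next I would compute $d(x)\pitchfork\psi=\bv_{t\in[0,1]}d(x)(t)\with\psi(t)$ for $x\in(0,1]$, splitting the join over $t=0$, $t\in(0,x]$, and $t\in(x,1]$. The term at $t=0$ is $1\with\psi(0)=\psi(0)$. For $t\in(0,x]$ one has $t\ra x=1$, so $d(x)(t)=x^+$, and, $\psi$ being a coweight (hence monotone), this block contributes $x^+\with\psi(x)$. For $t\in(x,1]$ every term is at most $x^+\wedge(t\ra x)\leq x^+$, using that $\sQ$ is integral. Hence
\[
d(x)\pitchfork\psi=\psi(0)\ \vee\ \big(x^+\with\psi(x)\big)\ \vee\ R_x,\qquad 0\leq R_x\leq x^+ .
\]

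Finally I would split on the size of $\psi(x)$. If $\psi(x)\leq x^+$, then Proposition~\ref{idempotent} applied to the idempotent $p=x^+$ gives $x^+\with\psi(x)=x^+\wedge\psi(x)=\psi(x)$, so $\psi(x)\leq d(x)\pitchfork\psi$ holds with no extra hypothesis. If $\psi(x)>x^+$, then both $x^+\with\psi(x)\leq x^+$ and $R_x\leq x^+$ are strictly below $\psi(x)$, so $\psi(x)\leq d(x)\pitchfork\psi$ forces $\psi(x)\leq\psi(0)$; since $\psi$ is monotone, $\psi(0)\leq\psi(x)$, and the only possibility is $\psi(x)=\psi(0)$, which conversely does suffice. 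Therefore $\psi(x)\leq d(x)\pitchfork\psi$ for all $x\in(0,1]$ exactly when ``$\psi(x)>x^+$ implies $\psi(x)=\psi(0)$'', as claimed. The routine verifications (that $d(x)$ is least with supremum $\geq x$, already done in Lemma~\ref{smallest idea}, and the piecewise evaluation of $d(x)(t)$) I would skip; the only delicate point is the idempotent bookkeeping in this last paragraph, and it presents no real obstacle.
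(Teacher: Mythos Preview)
Your proof is correct and follows essentially the same route as the paper: reduce Scott openness to the inequality $\psi(x)\leq d(x)\pitchfork\psi$ via the minimality of $d(x)$ from Lemma~\ref{smallest idea}, then evaluate $d(x)\pitchfork\psi$ and read off the condition. The only minor difference is in the evaluation step: the paper uses the coweight identity $(t\ra x)\with\psi(t)\leq\psi(x)$ to collapse the whole join over $t>0$ at once, obtaining the clean formula $d(x)\pitchfork\psi=\psi(0)\vee(x^+\wedge\psi(x))$, whereas you split into $t\in(0,x]$ and $t\in(x,1]$ and carry a bounded remainder $R_x\leq x^+$; this remainder is in fact absorbed by $x^+\with\psi(x)$, so the two computations coincide.
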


\begin{proof} This is the content of \cite[Lemma 4.11]{ZZ2019}. We present here a  simple proof   with help of Lemma \ref{smallest idea}. For each $x\in [0,1]$, let $d(x)$ be the smallest flat ideal of $X$ with supremum   larger than or equal to $x$. For each coweight $\psi$ of $([0,1],d_L)$, since we always have \[\psi(0)=d(0)(0)\with\psi(0)\leq d(0)\pitchfork\psi,\] it follows that   $\psi$  is Scott open   if, and only if, for all $x\in(0,1]$, $\psi(x)\leq d(x)\pitchfork\psi$. Because  \begin{align*}d(x)\pitchfork\psi &= \psi(0)\vee\bv_{t>0}(x^+\wedge(t\ra x))\with\psi(t)\\ & = \psi(0)\vee\bv_{t>0} x^+\with(t\ra x) \with\psi(t) &\text{($x^+$ is idempotent, Proposition \ref{idempotent})}\\ &=\psi(0)\vee(x^+\wedge\psi(x)),\end{align*} therefore, $\psi$ is Scott open if,  and only if,  for each $x\in (0,1] $,   $ \psi(x)>x^+$ implies  $\psi(x)=\psi(0)$. \end{proof}

\begin{proof}[Proof of Proposition \ref{Sierpinski is Scott}] Sufficiency follows from Example \ref{frame-valued VI}. To see the necessity, suppose on the contrary that $([0,1],\with)$ has a non-idempotent element, say $c$.
Then the map \[\lam(x)\coloneqq ((c\ra c^-)\vee(c\ra x))\wedge c^+\]
satisfies the conditions in Lemma \ref{Scott open set of dL}, hence it is a Scott open set of $\Omega\thinspace\mathbb{S}$. But, $\lam$ is not an open set of $\mathbb{S}$.
\end{proof}
Since for each commutative and unital quantale $\sQ$, the space $\mathbb{S}$ is an injective $\sQ$-topological space, Proposition \ref{Sierpinski is Scott}  shows that in the quantale-enriched context, the $\sQ$-topology of an injective $\sQ$-topological space may not be the Scott $\sQ$-topology of its specialization $\sQ$-order.


We end this section with a discussion of the sobriety of the space $\Sigma([0,1],\alpha_R)$, where $\alpha_R$ is the opposite of the canonical $\sQ$-order on $[0,1]$.

\begin{lem}{\rm(\cite[Lemma 4.14]{ZZ2019})} \label{fl}
Let $\with $ be a continuous t-norm and let $\sQ=([0,1],\with)$. Then, for each coweight $\psi$ of $([0,1],\alpha_R)$, it holds that
\begin{enumerate}[label={\rm(\roman*)}] \setlength{\itemsep}{0pt}
\item if $ \psi(x)\leq x^- $ then $ \psi(x)=\psi(1) $;
\item if $x$ is idempotent and $\psi(x)\geq x$ then $\psi(1)\geq x$.
\end{enumerate}
\end{lem}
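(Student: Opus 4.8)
The plan is to derive both statements from the defining inequality of a coweight, used in only two special cases, together with Proposition~\ref{idempotent}. Unwinding the definition, a map $\psi\colon([0,1],\alpha_R)\lra(\sQ,\alpha_L)$ is a coweight exactly when it is $\sQ$-order-preserving, i.e. $\alpha_R(u,v)\leq\alpha_L(\psi(u),\psi(v))$ for all $u,v$; since $\alpha_R(u,v)=v\ra u$ and $\alpha_L(\psi(u),\psi(v))=\psi(u)\ra\psi(v)$, this transposes, via the adjunction defining $\ra$, to
\[(v\ra u)\with\psi(u)\leq\psi(v)\qquad\text{for all }u,v\in[0,1].\]
First I would record two consequences. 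Taking $u=b$ and $v=a$ with $a\leq b$, and noting that $a\ra b$ is the top element $1$ (because $a\with z\leq a\leq b$ for every $z$), gives $\psi(b)\leq\psi(a)$; thus $\psi$ is order-reversing, and in particular $\psi(1)\leq\psi(x)$ for all $x$. Taking $u=x$ and $v=1$, and using that $\sQ$ is integral, so that $1$ is the unit of $\with$ and hence $1\ra x=x$, gives $x\with\psi(x)\leq\psi(1)$ for all $x$.

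With these two inequalities in hand, part~(i) is immediate. Suppose $\psi(x)\leq x^-$, where $x^-$ is an idempotent element with $x^-\leq x$ (when $x$ is itself idempotent one reads $x^-=x$). Then $\psi(x)\leq x^-\leq x$ with $x^-$ idempotent, so Proposition~\ref{idempotent} gives $x\with\psi(x)=x\wedge\psi(x)=\psi(x)$. Feeding this into the two recorded inequalities yields $\psi(x)=x\with\psi(x)\leq\psi(1)\leq\psi(x)$, hence $\psi(x)=\psi(1)$. For part~(ii), suppose $x$ is idempotent and $\psi(x)\geq x$. Then $x\leq x\leq\psi(x)$ with the idempotent $x$ in the middle, so Proposition~\ref{idempotent} gives $x\with\psi(x)=x\wedge\psi(x)=x$; the second recorded inequality now reads $\psi(1)\geq x\with\psi(x)=x$, which is the assertion.

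I do not expect a real obstacle here: this is a preparatory lemma, and the whole argument rests on the single instance $v=1$ of the coweight inequality --- made usable by integrality, which collapses $1\ra x$ to $x$ --- together with Proposition~\ref{idempotent}, which turns a quantale product into an ordinary meet over the range that matters. The one point that needs care is the bookkeeping with idempotents: I would check that the hypothesis $\psi(x)\leq x^-$ in (i) and the hypothesis $\psi(x)\geq x$ in (ii) are exactly of the form $a\leq p\leq b$ with $p$ idempotent that Proposition~\ref{idempotent} requires, and I would adopt the convention $x^-=x$ for idempotent $x$ so that (i) is meaningful for every $x\in[0,1]$ rather than only for non-idempotent $x$.
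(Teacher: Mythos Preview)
Your argument is correct. The paper does not supply its own proof of this lemma; it merely cites \cite[Lemma~4.14]{ZZ2019}, so there is nothing to compare against beyond noting that your self-contained derivation---resting on the single instance $v=1$ of the coweight inequality and Proposition~\ref{idempotent}---does the job cleanly. Your handling of the convention $x^-=x$ for idempotent $x$ is appropriate, since the paper defines $x^-$ only for non-idempotent $x$ (Theorem~\ref{ordinal sum}); with that convention the proof of (i) goes through uniformly.
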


\begin{lem} {\rm(\cite[Lemma 4.15]{ZZ2019})} \label{openindR} Let $\with $ be a continuous t-norm and let $\sQ=([0,1],\with)$. If $ b $ is a nontrivial idempotent element in $ ([0,1],\&) $, then for each  Scott open  set $\psi$ of $([0,1],\alpha_R)$, either $\psi$ is a constant map or $\psi(x)\leq b $ for all $ x\in[0,1] $. \end{lem}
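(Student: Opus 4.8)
The plan is to argue by contradiction. A Scott open set of $([0,1],\alpha_R)$ is in particular a coweight, so the underlying map $\psi\colon[0,1]\lra[0,1]$ is antitone for the usual order (the underlying order of $\alpha_R$ being the reverse of the usual one); hence $\psi(0)=\bv_{x}\psi(x)$, $\psi(1)=\bw_{x}\psi(x)$, and $\psi$ is constant exactly when $\psi(0)=\psi(1)$. It therefore suffices to show that if $\psi$ is Scott open with $\psi(0)>b$ for some nontrivial idempotent $b$, then $\psi(0)=\psi(1)$.

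First I would record what the coweight axiom alone provides near $b$. Putting $y=1$ into the defining inequality $(y\ra x)\with\psi(x)\leq\psi(y)$ gives $x\with\psi(x)\leq\psi(1)$ for all $x$; letting $x\nearrow b$ and using continuity of $\with$ together with Proposition \ref{idempotent} (which forces $t\with s=t\wedge s$ across $b$) yields $b\wedge v\leq\psi(1)$, where $v=\bw_{x<b}\psi(x)$. Combined with Lemma \ref{fl}, this already restricts the shape of $\psi$: if $v\geq b$ then $\psi(1)\geq b$, so $\psi\geq b$ everywhere; if $v<b$ then $\psi(b)\leq v<b$, so $\psi(x)<b$ for all $x\geq b$ and $\psi$ must cross the level $b$ somewhere in $[0,b]$. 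In both cases one is reduced to analysing a coweight that either stays above $b$ or changes from $\geq b$ to $<b$ at a single point.

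The crux---and the main obstacle---is that neither the coweight axiom nor Lemma \ref{fl} uses Scott openness, so to finish one must play the Scott equality $\psi(\sup\phi)=\phi\pitchfork\psi$ against a \emph{non-representable} flat ideal (the representable ones $\alpha_R(-,a)$ give only the tautology $\psi(a)=\psi(a)$). Concretely, I would single out a point $s$ at which $\psi$ passes through the value $b$ (taking $s=1$ if $\psi\geq b$ everywhere), note by the ordinal sum decomposition (Theorem \ref{ordinal sum}) that $s$ lies in a non-idempotent block $[s^-,s^+]$ on which $\with$ is isomorphic to a \L ukasiewicz or product t-norm, and then---mimicking the restriction arguments of Lemma \ref{f2} and Lemma \ref{smallest idea} and using the description of flat ideals inside an Archimedean block (as in Lemma \ref{fidealinA} and \cite{LZZ2020})---construct a flat ideal $\phi$ of $([0,1],\alpha_R)$ with $\sup\phi=s$ that is not of the form $\alpha_R(-,a)$. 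Evaluating $\phi\pitchfork\psi=\bv_{x}\phi(x)\with\psi(x)$, with Proposition \ref{idempotent} collapsing $\with$ to $\wedge$ across $b$, should then force $\psi(s)\leq b$, contradicting $\psi(s)>b$. The delicate work throughout is the explicit description of the flat ideals of $([0,1],\alpha_R)$ inside such a block together with the separate bookkeeping for the \L ukasiewicz and product cases (and, when $([0,1],\with)$ has no non-idempotent element, the ``ideals'' of Remark \ref{frame-valued flat ideal} in place of these); this is exactly the computation carried out in \cite[Section 4]{ZZ2019}, from which the statement is quoted, so one may alternatively just appeal to it.
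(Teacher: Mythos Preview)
The paper does not prove this lemma at all; it simply quotes it from \cite[Lemma 4.15]{ZZ2019}. In that sense your final sentence---``one may alternatively just appeal to it''---is exactly what the paper does, and the rest of your proposal is a sketch the paper never attempts.

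Viewed as a standalone argument, the sketch has a genuine gap at the one place that matters. The second paragraph uses only the coweight axiom and Lemma~\ref{fl}, and you correctly observe that this cannot suffice; the work has to come from Scott openness applied to a \emph{non-representable} flat ideal of $([0,1],\alpha_R)$. But that ideal is never produced. Lemmas~\ref{f2} and~\ref{smallest idea} are about flat ideals of $([0,1],\alpha_L)$, and nothing in this paper lets you transport them to $\alpha_R$; the two $\sQ$-ordered sets are not opposites of one another in any way that would make the transfer automatic. Moreover, the endgame is incoherent as written: you define $s$ as a point where $\psi$ ``passes through the value $b$'' (or $s=1$ in the other case), so $\psi(s)\leq b$ is what one expects, not something to be ``contradicted by $\psi(s)>b$''; and when $s=1$ or $s$ happens to be idempotent, $s^-,s^+$ are undefined and the ordinal-sum block you invoke does not exist. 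What you actually need to conclude is $\psi(0)=\psi(1)$, and the sketch never returns to that target.

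So: the paper and you both defer to \cite{ZZ2019}; your added outline is plausible in spirit (Scott openness must be tested against a non-representable flat ideal) but does not, as it stands, constitute a proof.
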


\begin{prop}\label{dR is a domain} Let $\with $ be a continuous t-norm and let $\sQ=([0,1],\with)$. Then the following  are equivalent:
\begin{enumerate}[label=\rm(\arabic*)] \setlength{\itemsep}{0pt}
	\item  The $\sQ$-ordered set $([0,1],\alpha_R)$ is an $\CF$-domain.
\item The $\sQ$-topological space  $\Sigma([0,1],\alpha_R)$ is sober.
	\item  $\with$ is Archimedean.
\end{enumerate}  \end{prop}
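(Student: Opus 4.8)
\textbf{Proof proposal for Proposition \ref{dR is a domain}.}
The plan is to prove the cycle $(3)\Rightarrow(1)\Rightarrow(2)\Rightarrow(3)$, using Theorem \ref{domain is sober} for the middle implication and Lemmas \ref{fl}--\ref{openindR} for the ones that involve the order-theoretic structure of $([0,1],\alpha_R)$. The implication $(1)\Rightarrow(2)$ is immediate: if $([0,1],\alpha_R)$ is an $\CF$-domain, then by Theorem \ref{domain is sober} (applicable since $([0,1],\with)$ is integral and commutative) its Scott $\sQ$-topology is sober, i.e.\ $\Sigma([0,1],\alpha_R)$ is sober. So the real content is $(3)\Rightarrow(1)$ and $(2)\Rightarrow(3)$.

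For $(3)\Rightarrow(1)$, assume $\with$ is Archimedean. Then $([0,1],\with)$ has no nontrivial idempotents, so $([0,1],\with)$ is (isomorphic to) one of the two Archimedean quantales, and one has good control of flat ideals. I would first show every flat ideal $\phi$ of $([0,1],\alpha_R)$ has a supremum (so $([0,1],\alpha_R)$ is an $\CF$-{\bf dcpo}): since $([0,1],\alpha_L)$ is cocomplete by Example \ref{sup in Q} and $\alpha_R=\alpha_L^{\rm op}$, completeness of $([0,1],\alpha_R)$ follows from the equivalence of completeness and cocompleteness; concretely $\sup\phi = \bw_{x}\phi(x)\ra\big(\text{something}\big)$, but the cleanest route is that $\CF$-cocompleteness of a separated complete $\sQ$-lattice is automatic. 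Then I would exhibit, for each $x\in[0,1]$, an explicit smallest flat ideal $\thda x$ of $([0,1],\alpha_R)$ whose supremum is $x$ — an analogue of $d(x)$ from Lemma \ref{smallest idea} but for the opposite order, which in the Archimedean case should be $\thda x(t)= \alpha_R(t,x)\vee(\text{bottom correction}) = (x\ra t)$-type formula read in $\alpha_R$, i.e.\ $\thda x(t)= t\wedge$ (appropriate $\ra$), and verify it is Scott open via Lemma \ref{fl} and that $\thda\colon([0,1],\alpha_R)\to\CF([0,1],\alpha_R)$ preserves $\sQ$-order and is left adjoint to $\sup$, using Theorem \ref{Characterization of adjoints}. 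Since there are no nontrivial idempotents, the obstruction that appeared in the $\alpha_L$ case (Proposition \ref{[0,1] is domain}, the {\L}ukasiewicz-on-$[p,q]$ pathology with $p>0$) simply cannot occur, so $\thda$ is order-preserving and $([0,1],\alpha_R)$ is an $\CF$-domain.

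For $(2)\Rightarrow(3)$, argue the contrapositive: suppose $\with$ is not Archimedean, so it has a nontrivial idempotent $b\in(0,1)$. By Lemma \ref{openindR}, every Scott open set $\psi$ of $([0,1],\alpha_R)$ is either constant or satisfies $\psi\leq b$ pointwise; I would use this structural restriction on $\CO(\Sigma([0,1],\alpha_R))$ to build a point $p\colon\CO\to\sQ$ of the $\sQ$-module of open sets that is \emph{not} of the form $\psi\mapsto\psi(a)$ for any $a$ — for instance set $p(\psi)=\psi(1)$ if $\psi$ is constant and $p(\psi)=$ the "jump value" of $\psi$ (its value forced by Lemma \ref{fl}(i)) otherwise, or more simply a $p$ that sends every non-constant open set to $b$ and respects (pt1)--(pt4) by the idempotency-controlled arithmetic of Proposition \ref{idempotent}; then no $a$ can realize $p$ because the two "halves" of the open-set lattice (constants vs.\ those bounded by $b$) would force incompatible values of $a$. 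Verifying $p$ satisfies (pt2)--(pt4) is the main computational obstacle, and that is where Proposition \ref{idempotent} (meets equal products across the idempotent $b$) does the work. Hence $\Sigma([0,1],\alpha_R)$ is not sober, completing the cycle.

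I expect the genuine difficulty to be concentrated in two places: first, writing down the correct $\thda x$ for the opposite order and checking its Scott openness and the adjunction in $(3)\Rightarrow(1)$ (the formula is dual to $d(x)$ but the Scott-open characterization one must use is Lemma \ref{fl}, not Lemma \ref{Scott open set of dL}, so the verification is not a mechanical transcription); and second, exhibiting the pathological point in $(2)\Rightarrow(3)$ and checking it is genuinely a $\sQ$-module point not realized by any element — here the presence of the idempotent $b$ is both the source of the extra open sets (via Lemma \ref{openindR}) and, via Proposition \ref{idempotent}, the reason the candidate $p$ is multiplicative. The $\CF$-{\bf dcpo} part and the implication $(1)\Rightarrow(2)$ are routine given the cited results.
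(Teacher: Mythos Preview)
Your cycle and the implication $(1)\Rightarrow(2)$ match the paper. The other two implications, however, are both taken along a much harder route than necessary, and in $(2)\Rightarrow(3)$ your construction is too vague to constitute a proof.

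For $(2)\Rightarrow(3)$ you try to manufacture a point of the $\sQ$-module $\CO(\Sigma X)$ not of the form $\psi\mapsto\psi(a)$, i.e.\ to show that $\eta_X$ fails to be surjective. The paper instead shows that $\eta_X$ fails to be \emph{injective}, which is far easier: combining Lemma~\ref{openindR} with Lemma~\ref{fl}(i), every Scott open set $\psi$ of $([0,1],\alpha_R)$ takes the same value at any two points $x,y>b$ (if $\psi$ is constant this is trivial; otherwise $\psi(x)\leq b\leq x^-$ forces $\psi(x)=\psi(1)$). Hence $\Sigma([0,1],\alpha_R)$ is not even $T_0$, and therefore not sober. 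Your proposed point $p$ (``send every non-constant open set to $b$'', or use a ``jump value'') is not clearly well-defined, and verifying (pt3) for joins that mix constant and non-constant open sets would be delicate at best; the $T_0$ argument sidesteps all of this.

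For $(3)\Rightarrow(1)$ you miss the decisive simplification: when $\with$ is Archimedean, the flat ideals of $X=([0,1],\alpha_R)$ are \emph{exactly} the representables $X(-,a)=\alpha_R(-,a)$ --- this is the $\alpha_R$-analogue of Lemma~\ref{fidealinA}, and the paper obtains it by citing Corollary~3.14 and Theorem~3.18 of \cite{LZZ2020}. Consequently $\sy_X\colon X\to\CF X$ is an isomorphism of $\sQ$-ordered sets, so $\thda=\sy_X$ is trivially left adjoint to $\sup$ and no ``bottom correction'' or $d(x)$-style construction is needed. Two further remarks on your sketch: your appeal to Lemma~\ref{fl} to check ``Scott openness'' of $\thda x$ is misplaced, since $\thda x$ is a weight (a flat ideal), not a coweight, and Scott openness plays no role in establishing the adjunction $\thda\dashv\sup$; and your detour through completeness/cocompleteness to obtain $\CF$-cocompleteness is correct but superfluous once one knows every flat ideal is already representable.
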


\begin{proof} Write $X$ for the $\sQ$-ordered set $([0,1],\alpha_R)$.

$(1)\Rightarrow(2)$ Theorem \ref{domain is sober}.

$(2)\Rightarrow(3)$ Suppose on the contrary that  $ ([0,1],\&) $ has a nontrivial idempotent element, say $b$. By Lemma \ref{fl} and Lemma \ref{openindR}, for all open set $\psi$ of $\Sigma X$ and   all $x,y>b$, it holds that $\psi(x)=\psi(y)$. Therefore, $\Sigma X$ is not a $T_0$ space, hence not sober,   contradicting the assumption.

$(3)\Rightarrow(1)$ By Corollary 3.14 and Theorem 3.18 in \cite{LZZ2020}, a weight $\phi$ of $X$ is a flat ideal if, and only if, $\phi =X(-,a)$ for some $a\in[0,1]$. By help of this fact, one readily verifies that \(a\mapsto X(-,a)\) defines a left adjoint of $\sup\colon\CF X\lra X$.
 \end{proof}


\end{document}